\begin{document}
\title{Height pairings for algebraic cycles   on\\
the product of  a curve and  a surface}
\author{Shou-Wu Zhang}
\maketitle

\theoremstyle{plain}
\newtheorem{thm}{Theorem}[section]
\newtheorem{theorem}[thm]{Theorem}
\newtheorem{proposition}[thm]{Proposition}
\newtheorem{prop}[thm]{Proposition}
\newtheorem{corollary}[thm]{Corollary}
\newtheorem{cor}[thm]{Corollary}
\newtheorem{lemma}[thm]{Lemma}
\newtheorem{lem}[thm]{Lemma}
\newtheorem{conjecture}[thm]{Conjecture}
\newtheorem{conj}[thm]{Conjecture}

\theoremstyle{Definition}
\newtheorem{definition}[thm]{Definition}
\newtheorem{defn}[thm]{Definition}

\theoremstyle{remark} 
\newtheorem{remark}[thm]{Remark}
\newtheorem{example}[thm]{Example}
\newtheorem{notation}[thm]{Notation}
\newtheorem{problem}[thm]{Problem}

\numberwithin{equation}{section}

 \newcommand{\sech}{ \mathrm{sech}\,}
 \newcommand{\csch}{\mathrm{csch}\,}

\newcommand{\BA}{{\mathbb {A}}}
\newcommand{\BB}{{\mathbb {B}}}
\newcommand{\BC}{{\mathbb {C}}}
\newcommand{\BD}{{\mathbb {D}}}
\newcommand{\BE}{{\mathbb {E}}}
\newcommand{\BF}{{\mathbb {F}}}
\newcommand{\BG}{{\mathbb {G}}}
\newcommand{\BH}{{\mathbb {H}}}
\newcommand{\BI}{{\mathbb {I}}}
\newcommand{\BJ}{{\mathbb {J}}}
\newcommand{\BK}{{\mathbb {K}}}
\newcommand{\BL}{{\mathbb {L}}}
\newcommand{\BM}{{\mathbb {M}}}
\newcommand{\BN}{{\mathbb {N}}}
\newcommand{\BO}{{\mathbb {O}}}
\newcommand{\BP}{{\mathbb {P}}}
\newcommand{\BQ}{{\mathbb {Q}}}
\newcommand{\BR}{{\mathbb {R}}}
\newcommand{\BS}{{\mathbb {S}}}
\newcommand{\BT}{{\mathbb {T}}}
\newcommand{\BU}{{\mathbb {U}}}
\newcommand{\BV}{{\mathbb {V}}}
\newcommand{\BW}{{\mathbb {W}}}
\newcommand{\BX}{{\mathbb {X}}}
\newcommand{\BY}{{\mathbb {Y}}}
\newcommand{\BZ}{{\mathbb {Z}}}

\newcommand{\CA}{{\mathcal {A}}}
\newcommand{\CB}{{\mathcal {B}}}
\newcommand{\CC}{{\mathcal{C}}}
\renewcommand{\CD}{{\mathcal{D}}}
\newcommand{\CE}{{\mathcal {E}}}
\newcommand{\CF}{{\mathcal {F}}}
\newcommand{\CG}{{\mathcal {G}}}
\newcommand{\CH}{{\mathcal {H}}}
\newcommand{\CI}{{\mathcal {I}}}
\newcommand{\CJ}{{\mathcal {J}}}
\newcommand{\CK}{{\mathcal {K}}}
\newcommand{\CL}{{\mathcal {L}}}
\newcommand{\CM}{{\mathcal {M}}}
\newcommand{\CN}{{\mathcal {N}}}
\newcommand{\CO}{{\mathcal {O}}}
\newcommand{\CP}{{\mathcal {P}}}
\newcommand{\CQ}{{\mathcal {Q}}}
\newcommand{\CR }{{\mathcal {R}}}
\newcommand{\CS}{{\mathcal {S}}}
\newcommand{\CT}{{\mathcal {T}}}
\newcommand{\CU}{{\mathcal {U}}}
\newcommand{\CV}{{\mathcal {V}}}
\newcommand{\CW}{{\mathcal {W}}}
\newcommand{\CX}{{\mathcal {X}}}
\newcommand{\CY}{{\mathcal {Y}}}
\newcommand{\CZ}{{\mathcal {Z}}}

\newcommand{\RA}{{\mathrm {A}}}
\newcommand{\RB}{{\mathrm {B}}}
\newcommand{\RC}{{\mathrm {C}}}
\newcommand{\RD}{{\mathrm {D}}}
\newcommand{\RE}{{\mathrm {E}}}
\newcommand{\RF}{{\mathrm {F}}}
\newcommand{\RG}{{\mathrm {G}}}
\newcommand{\RH}{{\mathrm {H}}}
\newcommand{\RI}{{\mathrm {I}}}
\newcommand{\RJ}{{\mathrm {J}}}
\newcommand{\RK}{{\mathrm {K}}}
\newcommand{\RL}{{\mathrm {L}}}
\newcommand{\RM}{{\mathrm {M}}}
\newcommand{\RN}{{\mathrm {N}}}
\newcommand{\RO}{{\mathrm {O}}}
\newcommand{\RP}{{\mathrm {P}}}
\newcommand{\RQ}{{\mathrm {Q}}}
\newcommand{\RS}{{\mathrm {S}}}
\newcommand{\RT}{{\mathrm {T}}}
\newcommand{\RU}{{\mathrm {U}}}
\newcommand{\RV}{{\mathrm {V}}}
\newcommand{\RW}{{\mathrm {W}}}
\newcommand{\RX}{{\mathrm {X}}}
\newcommand{\RY}{{\mathrm {Y}}}
\newcommand{\RZ}{{\mathrm {Z}}}

\newcommand{\fa}{{\frak a}}
\newcommand{\fg}{{\frak g}}
\newcommand{\fp}{{\frak p}}
\newcommand{\fk}{{\frak k}}
\newcommand{\fq}{{\frak q}}
\newcommand{\fl}{{\frak l}}
\newcommand{\fm}{{\frak m}}

\newcommand{\ab}{{\mathrm{ab}}}
\newcommand{\Ad}{{\mathrm{Ad}}}
\newcommand{\ad}{{\mathrm{ad}}}
\newcommand{\adm}{{\mathrm{adm}}}
\newcommand{\AJ}{{\mathrm{AJ}}}
\newcommand{\Alb }{{\mathrm{Alb}}}
\newcommand{\an}{{\mathrm{an}}}
\newcommand{\Aut}{{\mathrm{Aut}}}

\newcommand{\Br}{{\mathrm{Br}}}
\newcommand{\bs}{\backslash}
\newcommand{\bbs}{\|\cdot\|}

\newcommand{\Ch}{{\mathrm{Ch}}}
\newcommand{\cod}{{\mathrm{cod}}}
\newcommand{\coker}{{\mathrm{coker}}}
\newcommand{\cont}{{\mathrm{cont}}}
\newcommand{\cl}{{\mathrm{cl}}}
\newcommand{\cm}{{\mathrm{cm}}}

\newcommand{\der}{{\mathrm{der}}}
\newcommand{\dR}{{\mathrm{dR}}}
\newcommand{\disc}{{\mathrm{disc}}}
\newcommand{\Div}{{\mathrm{Div}}}
\renewcommand{\div}{{\mathrm{div}}}

\newcommand{\Eis}{{\mathrm{Eis}}}
\newcommand{\End}{{\mathrm{End}}}
\newcommand{\emb}{{\hookrightarrow}}

\newcommand{\fgl}{{\frak {gl}}}

\newcommand{\Frob}{{\mathrm{Frob}}}

\newcommand{\Gal}{{\mathrm{Gal}}}
\newcommand{\GL}{{\mathrm{GL}}}
\newcommand{\GO}{{\mathrm{GO}}}
\newcommand{\GSO}{{\mathrm{GSO}}}
\newcommand{\GSp}{{\mathrm{GSp}}}
\newcommand{\GSpin}{{\mathrm{GSpin}}}
\newcommand{\GU}{{\mathrm{GU}}}

\newcommand{\Hom}{{\mathrm{Hom}}}
\newcommand{\Hol}{{\mathrm{Hol}}}

\renewcommand{\Im}{{\mathrm{Im}}}
\newcommand{\Ind}{{\mathrm{Ind}}}
\newcommand{\inv}{{\mathrm{inv}}}
\newcommand{\Isom}{{\mathrm{Isom}}}

\newcommand{\Jac}{{\mathrm{Jac}}}
\newcommand{\JL}{{\mathrm{JL}}}

\newcommand{\Ker}{{\mathrm{Ker}}}
\newcommand{\KS}{{\mathrm{KS}}}

\newcommand{\Lie}{{\mathrm{Lie}}}

\newcommand{\new}{{\mathrm{new}}}
\newcommand{\NS}{{\mathrm{NS}}}
\newcommand{\NT}{{\mathrm{NT}}}

\newcommand{\ord}{{\mathrm{ord}}}
\newcommand{\ol}{\overline}

\newcommand{\rank}{{\mathrm{rank}}}
\newcommand{\rig}{{\mathrm{rig}}}

\newcommand{\perv}{\mathrm{perv}}
\newcommand{\PGL}{{\mathrm{PGL}}}
\newcommand{\PSL}{{\mathrm{PSL}}}
\newcommand{\Pic}{\mathrm{Pic}}
\newcommand{\Prep}{\mathrm{Prep}}
\newcommand{\Proj}{\mathrm{Proj}}
\newcommand{\PU}{\mathrm{PU}}

\renewcommand{\Re}{{\mathrm{Re}}}
\newcommand{\red}{{\mathrm{red}}}
\newcommand{\reg}{{\mathrm{reg}}}
\newcommand{\Res}{{\mathrm{Res}}}

\newcommand{\Sel}{{\mathrm{Sel}}}
\newcommand{\Sh}{{\mathrm{Sh}}}
\newcommand{\Shc}{{\mathcal {Sh}}}
\font\cyr=wncyr10  \newcommand{\Sha}{\hbox{\cyr X}}
\newcommand{\SL}{{\mathrm{SL}}}
\renewcommand{\sl}{{\mathfrak{sl}}}
\newcommand{\SO}{{\mathrm{SO}}}
\newcommand{\Sp}{\mathrm{Sp}}
\newcommand{\Spec}{{\mathrm{Spec}}}
\newcommand{\Sym}{{\mathrm{Sym}}}
\newcommand{\sgn}{{\mathrm{sgn}}}
\newcommand{\Supp}{{\mathrm{Supp}}}

\newcommand{\TC}{{\mathrm{TC}}}

\newcommand{\tr}{{\mathrm{tr}}}

\newcommand{\ur}{{\mathrm{ur}}}

\newcommand{\vol}{{\mathrm{vol}}}
\newcommand{\zar}{{\mathrm{zar}}}

\newcommand{\wt}{\widetilde}
\newcommand{\pp}{\frac{\partial\bar\partial}{\pi i}}
\newcommand{\ppr}{\frac{\partial\bar\partial}{2\pi i}}
\newcommand{\intn}[1]{\left( {#1} \right)}
\newcommand{\norm}[1]{\|{#1}\|}
\newcommand{\sfrac}[2]{\left( \frac {#1}{#2}\right)}
\newcommand{\ds}{\displaystyle}
\newcommand{\ov}{\overline}
\newcommand{\incl}{\hookrightarrow}
\newcommand{\imp}{\Longrightarrow}
\newcommand{\lto}{\longmapsto}
\newcommand{\sL}{{\mathsf L}}
\newcommand{\surj}{\twoheadrightarrow}

\newcommand{\pair}[1]{\langle {#1} \rangle}
\newcommand{\prim}{\mathrm{prim}}
\newcommand{\wpair}[1]{\left\{{#1}\right\}}
\newcommand\wh{\widehat}
\newcommand\Spf{\mathrm{Spf}}
\newcommand{\lra}{{\longrightarrow}}
\newcommand{\iso}{{\overset\sim\lra}}

\newcommand{\Ei}{\mathrm{Ei}} 

\newcommand{\todo}[1]{{\color{blue}
    \textsf{[#1]}}}

\begin{abstract}
For the product $X=C\times S$ of a curve and a surface over a number field,  
we prove  Beilinson--Bloch's conjecture about the existence of a height pairing  (\cite{Be, Bl}) between homologically trivial cycles. Then, for an embedding $f: C\lra S$, we construct an arithmetic diagonal cycle modified 
from the graph of $f$ and study its height.
This work extends the previous work of Gross and Schoen \cite{GS95}
to $X$  the product of three curves, and makes Gan--Gross--Prasad conjecture unconditional
for $\RO(1,2)\times \RO(2,2)$ and $\RU(1,1)\times \RU(2,1)$.
\end{abstract}

\tableofcontents

\section{Introduction}\label{sec-int}
Let $K$ be a number field or the function field of a curve $B$ over a field $k$, and
let $X=C\times S$ be the product of a curve and a surface, both smooth and projective over $K$.
This paper aims to extend the main results of Gross and Schoen \cite{GS95} in the case $S$ is the product of two curves. There are two goals.

Our first goal is to prove Beilinson--Bloch's conjecture about the existence of the height pairing:
$$\pair{\cdot, \cdot}_\BB: \Ch^2(X)^0\otimes \Ch^2(X)^0\lra \BR,$$
where $\Ch^2(X)^0$ denote the Chow group of homologically trivial cycles of codimension $2$.
We recover the result of Gross and Schoen for triple products of curves with a new proof. Gross and Schoen used Tate's conjectures, which are known for triple products of curves. Furthermore, our proof extends their result to the case of products of curves and surfaces, for which Tate's conjecture is open but for which Grothendieck's standard conjectures can be shown to be held and imply the existence of Beilinson--Bloch height pairings by our recent work \cite{Zh21}.

Our second goal is to construct homologically trivial cycles  $\gamma\in \Ch^2(X)^0$ modified from the graph of finite morphisms $f: C\lra  S$. 
 As an attempt to generalize our results 
 in \cite{Zh10} from triple products of curves to products of curves and surfaces, we will provide a formula for the height $\pair{\gamma, \gamma}_\BB$ in the case that $K$ is a function field $K=k(B)$ 
 and where both $C$ and $S$ have smooth models over $B$. When $S$ is a K3 surface in the function field case with good reduction, our formula shows that   Beilinson's Hodge index conjecture \cite{Be} gives an inequality between the $\omega_C^2$ and the canonical height of  $S$.   We expect a precise formula to hold in general for  
 function field and number field situations and have interesting applications to Diophantine problems.

 This work arose from an attempt to understand a   conjecture proposed by   Gan, Gross,  and Prasad (GGP) \cite{GGP, Zh18, Zh19}, which relates the derivatives of  $L$-series to the conjectured height pairings of arithmetic diagonal cycles for embeddings of Shimura varieties. Our work provides such height pairings and thus makes the GGP conjecture unconditional for $\RO(1,2)\times \RO(2,2)$ and $\RU(1,1)\times \RU (2,1)$. More precisely, 
 when $f$ embeds a Shimura curve into a Shimura surface, then for each Hecke operator $t$ of $X$, 
 we have a cycle $t_*\gamma\in \Ch^2(X)^0$. Moreover, when $t$ is an idempotent for some new form in some automorphic representation $\pi$, then  $\pair{t_*\gamma, t_*\gamma}_\BB$ is essentially equal to the central derivative of the specific $L$-series $L(s, \pi)$.

 The plan of this paper is as follows. In \S2-3 for any  given  polarizations $\eta$ of $C$ and $\xi$ of $S$,
  we will  first give some canonical decompositions   for the groups $A^*(X)$ of algebraic cohomology cycles and $\Ch^*(X)^0$ of homologically trivial cycles. 
  There are  two consequences of these decompositions: one is   Grothendieck's standard conjecture for $A^*(\wt X)$ 
  for any $\wt X$ obtained from $X$ by 
  successive blow-ups along smooth curves and points, another is a decomposition $\Ch^*(X)=\Ch^*(X)^0\oplus A^*(X)$.
  Then we define certain quotients $J^i(X)$ of $\Ch^*(X)^0$, which we call the intermediate Jacobians and the bi-primitive subgroup $J^2(X)_{00}$ annihilated by both $\xi$ and $\eta$.
  Finally, we provide a canonical way to modify a cycle in $\Ch^*(X)$ to become a cycle in  $J^2(X)_{00}$. 
  
  In \S4-5,  when  both $C$ and $S$ have strictly semistable models $\CC$ and $\CS$ over a discrete valuation ring $R$, 
  we will first construct a strictly semistable model $\CX$ for $X$ by successive blowing up along the strict transforms of the 
  irreducible components in the special fiber of $\CC\times_R \CS$. 
  The results  in section \S2-3  will imply that the resulting irreducible components in the special fiber of $\CX$
  satisfy the Grothendieck standard conjectures. Then  we apply  our recent  work in  \cite{Zh21} to define the Beilinson--Bloch height pairings 
  $\pair{\cdot, \cdot}_\BB$ for 
  $\Ch^*(X)^0$ when $X$ is defined over number  fields or function fields.
  Finally, we extend the height pairing to general $C$ and $S$ using de Jong's alteration \cite{dJ}.
   
  In \S6-7, first, we define the arithmetic diagonal  $\gamma$ as mentioned above. 
  Then, we restrict our study in the case $H^1(S)=0$ and $\deg f^*f_*C\ne 0$. In this case, we have a simpler formula describing the cycle $\gamma$.  Finally,  we give a  formula for $\pair{\gamma, \gamma}_\BB$ in the function field case $K=k(B)$
  when both $C$ and $S$ have smooth models over $B$. 
  \\
  
 \noindent{\bf Acknowledgments.}  
 We want to thank Benedict Gross, Klaus K\"unnemann, Yifeng Liu, Xinyi Yuan, 
 and Wei Zhang for their comments on an early draft of this paper. We also thank the referee for the valuable comments 
 and suggestions.
 This work has been partially supported by an NSF award, DMS-2101787.

\section{Algebraic  cohomology cycles}\label{acc}
Let $X$ be a smooth and projective variety over an algebraically closed field $K$ of dimension $n$.
For an integer $i$, let  $\Ch^i(X)$ denote the group of Chow cycles of codimension $i$ 
with rational coefficients modulo rational equivalence, and consider the cohomological class map:
$$\cl^i: \Ch^i(X)\lra H^{2i}(X)(i)$$
in a fixed Weil cohomology theory. 
Let $A^i(X)$ denote the image of $\cl^i$ and let $\Ch^i(X)^0$ denote the kernel of $\cl^i$.

When $i=1$ or $i=n$, these groups are related to abelian varieties by  the identity $\Ch^1(X)^0=\Pic^0(X)_\BQ$  and the 
Abel--Jacobi map 
$$\AJ: \Ch^n(X)^0\lra \Alb(X)(K)\otimes _\BZ\BQ.$$

Let $\CL$ be an ample line bundle on $X$. 
Then taking the intersection with $c_1(\CL)$ defines a Lefschetz operator $\sL: A^i(X)\lra A^{i+1}(X)$.
Here is the Grothendieck standard conjecture.
\begin{conj}[Grothendieck] Let $i\le n/2$. 
\begin{enumerate}
\item Hard Lefschetz theorem:  The following induced map is bijective: 
$$\sL^{n-2i}: A^i(X)\iso A^{n-i}(X).$$
\item Hodge index theorem: For a nonzero $\alpha\in A^i(X)$ such that $\sL^{n+1-2i}\alpha=0$, 
$$(-1)^i\alpha \cdot \sL ^{n-2i}\alpha >0.$$
\end{enumerate}
\end{conj}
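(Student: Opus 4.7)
As stated, this is the classical Grothendieck standard conjecture, which remains open in general. Since the introduction signals that the paper only needs the case of $X = C\times S$ and of varieties $\widetilde X$ obtained from such $X$ by successive blow-ups along smooth curves and points, I will aim for that restricted statement rather than the general conjecture.

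For $X = C\times S$ itself, I would decompose $A^*(X)$ via the Künneth formula. All of $H^*(C)$ is algebraic, and the algebraic part of $H^*(S)$ is $\BQ \oplus \NS(S)_\BQ \oplus \BQ$, so $A^*(X)$ splits into ``pure Künneth'' summands $A^a(C) \otimes A^b(S)$, plus a single mixed $(1,1)$-piece in $A^2(X)$ coming from $\Hom(\Alb(S),\Jac(C))_\BQ$ (there is no analogous mixed piece in higher codimension since $\dim C = 1$). With the polarization $\CL = \pi_C^*\eta + \pi_S^*\xi$ the Lefschetz operator decomposes as $\sL = \sL_C\otimes 1 + 1\otimes \sL_S$, and Hard Lefschetz on each pure Künneth piece reduces to Hard Lefschetz on $C$ (trivial) and on $S$ (classical, the standard conjecture being a theorem for surfaces). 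On the mixed piece, $\sL^{n-2i}$ is a polynomial in $\sL_C$ and $\sL_S$ whose bijectivity is a direct check against the bigrading.

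For the Hodge index statement, the pure Künneth pieces inherit the required sign from the Hodge--Riemann relations on the two factors. The main obstacle I expect is the mixed $(1,1)$-piece of $A^2(X)$: via the correspondence interpretation, the intersection pairing there becomes $\phi\mapsto \tr(\phi\circ\phi^{\dagger})$ on $\Hom(\Alb(S),\Jac(C))_\BQ$, where $\dagger$ is the Rosati involution determined by $\eta$ and $\xi$. Positivity of the Rosati involution, a classical theorem of Weil, then supplies the desired Hodge index inequality with the correct sign.

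To pass from $X$ to a blow-up $\widetilde X$ along a smooth curve or point $Y$, I would invoke the standard blow-up decomposition expressing $A^*(\widetilde X)$ as a direct sum of $A^*(X)$ and shifted copies of $A^*(Y)$, together with the known fact that the standard conjecture lifts along blow-ups of smooth centers once it holds on both $X$ and the center. Since curves and points satisfy the conjecture trivially, this reduces everything to the case of $X = C\times S$ treated above.
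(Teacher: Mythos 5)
The statement you are proving is stated in the paper as a \emph{conjecture}; the paper never proves it in general, and what it actually establishes (and needs) is the special case $X=C\times S$ and its blow-ups, via a different route: a remark reduces the whole conjecture for a threefold to the single inequality $\dim A^1(X)\ge \dim A^2(X)$ (Hodge index for $i=1$ being classical, hence $\sL:A^1\to A^2$ injective), and Proposition \ref{prop-A} then produces an explicit decomposition of $A^1(X)$ and $A^2(X)$ showing they have the same dimension. Your restriction to the case the paper needs is reasonable, but your argument for that case has a genuine gap, and it sits exactly at the heart of the matter. Your K\"unneth bookkeeping is off: besides the pure summands $A^a(C)\otimes A^b(S)$ there are \emph{two} mixed pieces, one in $A^1(X)$ (algebraic classes in $H^1(C)\odot H^1(S)$, i.e.\ divisorial correspondences, identified with $\Hom(\Jac(C),\Pic^0(S))_\BQ$) and one in $A^2(X)$ (algebraic classes in $H^1(C)\odot H^3(S)$). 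With only the single mixed piece you allow, hard Lefschetz $\sL:A^1(X)\iso A^2(X)$ would simply be false whenever that piece is nonzero, so your claim that bijectivity on the mixed part is ``a direct check against the bigrading'' cannot be right as stated.

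The real content is the surjectivity of $\cdot\,\xi: A^1(X)_{\rig}\to A^2(X)_{\rig}$, equivalently your unproved assertion that the mixed part of $A^2(X)$ ``comes from'' $\Hom(\Alb(S),\Jac(C))_\BQ$. This is not formal: a priori an algebraic class in $H^1(C)\odot H^3(S)$ need not be a divisor class times $\xi$, and this is precisely where the general standard conjecture for threefolds is open. The paper's proof of Proposition \ref{prop-A}(3) supplies the missing argument: lift $\alpha\in A^2(X)_{\rig}$ to a Chow class, observe that it induces a homomorphism $\Pic^0(S)\to\Pic^0(C)$ depending only on the cohomology class, compose with the inverse of the polarization isogeny $\Pic^0(S)\to\Alb(S)$, $\CM\mapsto \AJ(\CM\cdot\xi)$, and represent the resulting element of $\Hom(\Alb(S),\Pic^0(C))_\BQ$ by a divisor class $\mu$ with $\alpha=c_1(\mu)\cdot\xi$. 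Your proposal contains no substitute for this step. The remaining ingredients you cite are fine in spirit but partly misplaced: the Hodge index assertion in the conjecture concerns $i\le n/2$, hence primitive classes in $A^1(X)$ (where the statement is classical and the paper just quotes it), not the mixed piece of $A^2(X)$; and your Rosati-positivity remark would indeed handle the correspondence part of $A^1$, consistent with, but not needed by, the paper's shortcut. The blow-up reduction you sketch matches the paper's proposition and is unproblematic.
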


\begin{remark}
For $i=0$, the conjecture is trivially true. For $i=1$,  the Hodge index theorem is true; see \cite[Expos\'e XIII, Corollary 7.4]{SGA}. This implies that $\sL^{n-2}: A^1(X)\lra A^{n-1}(X)$ is always injective and that $\dim A^1(X)<\infty$.
Thus, the conjecture is true for $n=1, 2$, and is equivalent to the inequality $\dim A^1(X)\ge \dim A^2(X)$ for $n=3$.
\end{remark}

In the rest of this section, we will assume that $X$ is of the form $C\times S$ with $C$ a curve and $S$ a surface. Let $\pi_C$ and $\pi_S$ denote two projections. We will consider $\Ch^*(X)$ as the group  correspondences between $\Ch^*(C)$ and $\Ch^*(S)$ as usual: for any $\alpha \in \Ch^*(X)$ and $\beta \in \Ch^*(C)$ and $\gamma \in \Ch^*(S)$, we define 
$$\alpha _*(\beta)=\pi_{S*}(\alpha \cdot \pi_C^*\beta)\in \Ch^*(S), 
\qquad \alpha ^*(\gamma)=\pi_{C*} (\alpha\cdot \pi_S^*\gamma) \in \Ch^*(C).$$
For  classes $\alpha \in \Ch^i(C)$ and $\beta \in \Ch^j(S)$,  we denote 
$$\alpha \boxtimes \beta=\pi_C^*\alpha \cdot \pi_S^* \beta\in \Ch^{i+j}(X).$$
We will use the same notation for the induced operation in the K\"unneth decomposition:
$$\boxtimes: A^*(C)\otimes A^*(S)\lra A^*(X).$$

We will fix  ample classes $\xi\in \Ch^1(S)$, $\eta\in \Ch^1(C)$  such that $\deg \eta=1$. We consider them as cycles on $X$ via pull-backs. 
Let $A^i(X)_{\rig}$ denote the subgroup  of $A^i(X)$ of elements $\alpha$ with rigidifications:
$$\alpha _*(\eta)=0, \qquad \alpha _*([C])=0.$$
It can be checked that  $A^i(X)_{\rig}$ is the intersection of $A^i(X)$ and  $H^1(C)\otimes H^{2i-1}(S)$ in the K\"unneth decomposition:
$$H^{2i}(X)(i)=\bigoplus _{j=0}^{2}H^j(C)\boxtimes H^{2i-j}(S)(i).$$
Our starting point is a  description of the group $A^*(X)$ in terms of $\xi, \eta, A^*(C), A^*(S)$, and $A^*(X)_{\rig}$.
\begin{prop}\label{prop-A} Let $X=C\times S$ and $\xi$ and $\eta$ be as above. Then
\begin{enumerate}
\item  $A^0(X)=\BQ [X]$, $A^3(X)=\BQ (\eta\boxtimes \xi^2)$.
\item For $i=1,2$,  $ A^i(X)=A^i(X)_{\rig}\oplus [C]\boxtimes A^i(S)\oplus \eta\boxtimes  A^{i-1}(S)$ with respect to the decomposition 
$$H^{2i}(X)=H^1(C)\boxtimes H^{2i-1}(S) \oplus [C]\boxtimes H^{2i}(S)\oplus \eta\boxtimes H^{2i-2}(S).$$
The projection to the last two components is given by 
\begin{equation}\label{eq-kd}
\alpha\mapsto [C]\boxtimes  \alpha_*(\eta)+\eta\boxtimes \alpha_*[C].\end{equation}
\item The intersection with $\xi$ gives a bijection $A^1(X)_{\rig}\iso A^2(X)_{\rig}$ with respect to the isomorphism
given by intersection with $\pi_S^*\xi$:
$$H^1(C)\boxtimes H^1(S)\iso H^1(C)\boxtimes H^3(S).$$
\end{enumerate}
\end{prop}
\begin{proof} The first part is clear. For the second part, we notice that the map \ref{eq-kd} takes cycles in $A^i(X)$ to cycles in $A^i(X)$. Thus it induces a decomposition on $A^i(X)$. For the third part, we only need to prove the surjectivity. Let $\alpha \in A^2(X)_{\rig}$. 
We lift $\alpha$ to a class  $\wt\alpha\in \Ch^2(X)$ and consider the morphism of abelian varieties:
$$\wt\alpha^*: \Pic^0 (S)\lra \Pic^0(C).$$
This map does not depend on the choice of lifting $\wt\alpha$ because $\alpha^*$ induces the following map on  cohomology:
$$H^1(\Pic^0(C))=H^1(C)\overset {\alpha ^*} \lra H^3(S)(1)=H^1(\Pic ^0(S)).$$
Now we combine the map $\wt\alpha ^*$  with the following polarization map defined by $\xi$:
$$\varphi: \Pic^0(S)\lra \Alb (S): \CM\mapsto \AJ(\CM\cdot \xi),$$
and  obtain an  $f:=\wt\alpha ^*\cdot \varphi^{-1}\in \Hom (\Alb (S), \Pic ^0(C))_\BQ$.
This  $f$  is represented by a unique  class $\mu\in \Ch^1(X)$ such that $\mu_*(\eta)=0$ and $\mu ^*(\xi^2)=0$. 
From construction, it is clear that 
$$\alpha =c_1(\mu)\cdot \xi.$$
 \end{proof}
 
 \begin{remark}[Functoriality and correspondences] Let 
 $$f=f_{C'}\times f_{S'}: X':=C'\times S'\lra C\times S$$ be a finite morphism with $C'$ and $S'$ smooth and projective. Then, we have the induced  homomorphisms of groups:
 $$f_*: A^*(X')\lra A^*(X), \qquad f^*: A^*(X)\lra A^*(X').$$
 Let $\xi', \eta'$ be polarizations of $S'$ and $C'$ with 
 $\xi'\in \BQ f^*\xi$ and $\eta'\in \BQ f^*\eta$. 
 These two maps respect the decomposition in Proposition \ref{prop-A}. More generally,  we have two actions by correspondence: 
  $$A^3(X\times X)\lra \End (A^*(X)):  \qquad \alpha \mapsto \alpha_*, \quad \alpha \mapsto \alpha ^*.$$
  One is a homomorphism of algebras over $\BQ$, and another is an anti-homomorphism:
  $$(\alpha_1\circ \alpha_2)^*=\alpha_1^*\circ \alpha_2^*, \qquad (\alpha _1\circ \alpha _2)_*=\alpha_{1*}\circ \alpha _{2*}.$$
  Let $A^1(C\times C)_\rig$ (resp. $A^2(S\times S)_\rig$) be the subgroup of $A^1(C\times C)$ (resp. $A^2 (S\times S)$)
  of elements $\alpha$ such that both $\alpha _*$ and $\alpha ^*$ fix the line $\BQ\eta$ (resp. $\BQ \xi$).
  Define $$A^3(X\times X)_\rig:=A^1(C\times C)_\rig \boxtimes A^2(S\times S)_\rig .$$
Then, the above actions restricted to  $A^3(X\times X)_\rig$ will respect the decomposition in Proposition \ref{prop-A}.
  \end{remark}

By Proposition \ref{prop-A},  $A^2(X)$ and $A^1(X)$ have the same dimension. Thus, we have the following Corollary.

\begin{cor} Let $X=C\times S$ be the product of a curve and a surface. Then Grothendieck's standard conjecture holds.
\end{cor}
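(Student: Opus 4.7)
The plan is to reduce the statement to a short dimension count via the remark and Proposition \ref{prop-A}. Since $\dim X = 3$, the remark tells us that the only substantive piece of the standard conjecture still to be checked is the inequality $\dim A^1(X) \geq \dim A^2(X)$: hard Lefschetz at $i = 0$ reduces to the nonvanishing of $c_1(\CL)^3$ in the one-dimensional space $A^3(X)$ (which holds because $\CL$ is ample); the Hodge index at $i = 0$ is the same positivity $c_1(\CL)^3 > 0$; the Hodge index at $i = 1$ is the classical Hodge index theorem for divisors on a threefold; and the injectivity of $\sL : A^1(X) \to A^2(X)$ is already guaranteed by that same theorem.

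It then remains to compute the relevant dimensions using Proposition \ref{prop-A}. Parts (1) and (2) give the decompositions
$$\dim A^1(X) = \dim A^1(X)_{\rig} + \dim A^1(S) + \dim A^0(S),$$
$$\dim A^2(X) = \dim A^2(X)_{\rig} + \dim A^2(S) + \dim A^1(S).$$
Because $S$ is a smooth projective connected surface, $A^0(S)$ and $A^2(S)$ are both one-dimensional. Part (3) of the proposition provides the isomorphism $A^1(X)_{\rig} \iso A^2(X)_{\rig}$ via intersection with $\xi$, so the two rigidified summands have the same dimension. Combining these observations yields $\dim A^1(X) = \dim A^2(X)$, which is exactly what is needed.

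There is no serious obstacle remaining: all of the delicate work has been absorbed into the proof of Proposition \ref{prop-A}, and in particular into the descent through $\Hom(\Alb(S), \Pic^0(C))_\BQ$ that produces the isomorphism of rigidified pieces. The corollary is then a bookkeeping identity that trades the copy of $A^0(S)$ appearing in $A^1(X)$ for the copy of $A^2(S)$ appearing in $A^2(X)$, with the rigidified parts matched by the third part of the proposition.
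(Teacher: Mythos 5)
Your proposal is correct and follows the paper's own route: the paper derives the corollary exactly by combining the earlier remark (which reduces the standard conjecture for a threefold to $\dim A^1(X)\ge \dim A^2(X)$, using the classical Hodge index theorem for divisors to get injectivity of $\sL$) with the dimension count from Proposition \ref{prop-A}, whose parts (2) and (3) trade $A^0(S)$ for $A^2(S)$ and match the rigidified pieces, precisely as you spell out. The only difference is that the paper leaves this bookkeeping implicit in the one-line statement ``$A^1(X)$ and $A^2(X)$ have the same dimension,'' which you have simply made explicit.
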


We also need the standard conjectures for blow-ups of $X$ to define Beilinson--Bloch height pairings in \S5:

\begin{prop} Let $Y$ be a smooth and projective threefold.
 Let $\pi: \wt Y\lra Y$ be a blow-up of $Y$ along a smooth subvariety $Z$.
 Then, the standard conjectures for $Y$ and $\wt Y$ are equivalent.  
 \end{prop}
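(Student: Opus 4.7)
The plan is to reduce the claim to a single dimension count, using the remark that for a smooth projective threefold the standard conjectures are equivalent to the single inequality $\dim A^1 \ge \dim A^2$. The smooth center $Z$ of the blow-up has dimension $0$, $1$, or $2$; the case $\dim Z = 2$ is trivial because $\pi$ is then an isomorphism, so I focus on $Z$ being a point (codimension $c = 3$) or a smooth curve ($c = 2$).

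The key tool I would invoke is the classical blow-up formula
$$A^i(\wt Y) \;\cong\; \pi^* A^i(Y) \oplus \bigoplus_{k=1}^{c-1} j_* \bigl(p^* A^{i-k}(Z) \cdot \zeta^{k-1}\bigr),$$
where $j: E \hookrightarrow \wt Y$ is the inclusion of the exceptional divisor, $p: E \to Z$ the projective bundle map, and $\zeta = c_1(\CO_E(1))$. Since $A^*(Z)$ is one-dimensional in each degree $0 \le k \le \dim Z$ and zero otherwise, I expect a direct inspection to give, in both cases,
$$\dim A^1(\wt Y) = \dim A^1(Y) + 1, \qquad \dim A^2(\wt Y) = \dim A^2(Y) + 1.$$
Concretely: for $c = 2$ the new generators are $[E]$ in $A^1$ and a ruling fiber in $A^2$, while for $c = 3$ they are $[E]$ and the class of a line in $E \cong \BP^2$.

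Consequently the difference $\dim A^1 - \dim A^2$ is preserved under $\pi$, and by the remark the standard conjectures hold for $Y$ precisely when they hold for $\wt Y$. The only point requiring care is that the blow-up formula be invoked at the level of algebraic cohomology $A^*$ rather than full Weil cohomology; this should be routine, since every additional generator on the right is visibly algebraic, realized as a pushforward from $E$ of a polynomial in $\zeta$ whose coefficients are pulled back from $Z$. I do not anticipate any serious obstacle beyond bookkeeping.
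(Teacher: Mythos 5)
Your argument is correct and is essentially the paper's proof: both reduce the statement, via the remark that for a threefold the standard conjecture amounts to comparing $\dim A^1$ with $\dim A^2$, to the fact that the blow-up decomposition of $A^*(\wt Y)$ (generated by $\pi^*A^*(Y)$ together with pushforwards from the exceptional divisor) raises both $\dim A^1$ and $\dim A^2$ by exactly one. Your bookkeeping is if anything slightly more careful than the paper's (e.g.\ using the ruling-fiber class rather than $E^2$ as the new codimension-two generator when the center is a curve), but the route is the same.
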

 
 \begin{proof} Let $\iota: E\lra \wt Y$ be the exceptional divisor. Then $A^*(\wt Y)$ is generated by $\pi^*A^*(Y)$ and  $\iota_*A^*(E)$.
 Notice that  $E$ is a projective bundle over $Z$. So $A^*(E)$ is generated over $A^*(Z)$ by the first Chern class of $\CO_E(1)$.
 Thus we have $A^1(\wt Y)=\pi^*A^1(Y)+\BQ [E]$, and $A^2(\wt Y)=\pi^*A^2(Y)+\BQ E^2$. This shows that $\dim A^1(Y)=\dim A^2(Y)$ is 
 equivalent to that $\dim A^1(\wt Y)=\dim A^2(\wt Y)$. 
 See \cite[\S6.7]{Fu} for more details.
 \end{proof}
 
 \begin{cor}\label{cor-sc}  Let $\wt X$ be a smooth and projective three-fold obtained from 
 the product $X=C\times S$ of curve and surface by successively blowing up along smooth curves.
 Then, the standard conjecture holds for $\wt X$.  \end{cor}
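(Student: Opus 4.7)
The plan is a straightforward induction on the number of blow-ups used to construct $\wt X$ from $X$, combining the two immediately preceding results. The base case of the induction is $\wt X=X=C\times S$, for which the standard conjecture was established in the previous corollary. For the inductive step, suppose $\wt X_k\lra \wt X_{k-1}\lra \cdots\lra \wt X_0=X$ is a tower of blow-ups along smooth curves, and that the standard conjecture has already been verified for $\wt X_{k-1}$. Since $\wt X_k\to \wt X_{k-1}$ is a blow-up of a smooth projective threefold along a smooth closed subvariety, the previous proposition applies and yields the equivalence of the standard conjecture on $\wt X_k$ with that on $\wt X_{k-1}$; the inductive hypothesis therefore gives it for $\wt X_k$.

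The only point that needs to be checked at each stage is that the variety being blown up is again a smooth projective threefold, so that the hypothesis of the preceding proposition is in force. This is a standard fact: the blow-up of a smooth projective variety along a smooth closed subvariety is again smooth and projective of the same dimension, and the exceptional divisor is a projective bundle over the center. In our case the centers are smooth curves, so each exceptional divisor is a $\BP^1$-bundle over a curve, which is precisely the situation handled by the proof of the previous proposition.

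There is no real obstacle here; the argument is purely formal given the two results already proved. If anything, the most delicate verification is implicit in the previous proposition, namely that $\dim A^1$ and $\dim A^{n-1}$ change by the same amount under a blow-up, so that equality of these dimensions is preserved; this in turn is what reduces the standard conjecture in dimension three to a dimension count, as noted in the earlier remark. Iterating this one-step comparison along the tower completes the proof.
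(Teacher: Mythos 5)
Your argument is exactly the one the paper intends: the corollary is stated without proof precisely because it follows by the induction you describe, with the base case given by the standard conjecture for $C\times S$ and the inductive step by the preceding proposition on blow-ups of smooth projective threefolds along smooth centers. Your additional remark that each blow-up is again a smooth projective threefold is the only hypothesis to verify, and it is handled correctly, so the proposal is complete and matches the paper's approach.
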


 \section{Homologically trivial cycles}\label{sec-htc}
  In this section, let $K$ be an algebraically closed field and $X=C\times S$ be the product of a curve and a surface, both smooth and projective over $K$.
  We will fix  classes   $\xi\in \Ch^1(S)$,   $\eta\in \Ch^1(C)$ such that $\xi$ is ample and 
$\deg \eta=1$. Then we can define analogously the subgroup $\Ch^1(X)_{\rig}$ of  $\Ch^1(X)$ of elements  with following rigidifications:
 $$\alpha_*(\eta)=0, \qquad \alpha ^*(\xi^2)=0.$$ Then the map $\Ch^1(X)\lra A^1(X)$ defines a bijection 
 $\Ch^1(X)_{\rig}\lra A^1(X)_{\rig}$. Thus Proposition \ref{prop-A} defines maps for $i=1,2$:
 $\alpha\mapsto \alpha_\mu: \Ch^i(X)\lra \Ch^1(X)_{\rig}$.
 
 The combination of parts 2 and 3 of Proposition \ref{prop-A} gives the following
 \begin{prop}\label{prop-Ch}  For  $i=1,2$, we have a decomposition:
  $$\Ch^i(X)=\Ch^i(X)^0\oplus [C]\boxtimes  \Ch^i(S)\oplus \eta\boxtimes \Ch^{i-1}(S)\oplus\Ch^1(X)_{\rig}\xi^{i-1}.$$
 such that the projection to the last three components is given by 
 $$\alpha\mapsto [C]\boxtimes \alpha_*(\eta)+\eta\boxtimes \alpha _*[C]+\alpha _\mu\xi^{i-1}.$$
  \end{prop}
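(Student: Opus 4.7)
My plan is to lift the cohomology-level decomposition of Proposition~\ref{prop-A} to the Chow group by constructing an explicit projection. The three coefficient maps $\alpha \mapsto \alpha_*(\eta) \in \Ch^i(S)$, $\alpha \mapsto \alpha_* C \in \Ch^{i-1}(S)$, and $\alpha \mapsto \alpha_\mu \in \Ch^1(X)_{\rig}$ are all well-defined on $\Ch^i(X)$: the first two as the standard pushforwards, and the third as recalled just before the statement, via part~(3) of Proposition~\ref{prop-A} combined with the bijection $\Ch^1(X)_{\rig} \iso A^1(X)_{\rig}$ from the opening of Section~\ref{sec-htc}. Thus the formula
$$p(\alpha) := C \odot \alpha_*(\eta) + \eta \odot \alpha_* C + \alpha_\mu \cdot \xi^{i-1}$$
defines an endomorphism $p$ of $\Ch^i(X)$ whose image lies in the three claimed non-$\Ch^i(X)^0$ summands.

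Next I would compute $\cl(p(\alpha))$. Using the identities $\cl(C \odot \alpha_*(\eta)) = C \odot \cl(\alpha)_*(\eta)$ and $\cl(\eta \odot \alpha_* C) = \eta \odot \cl(\alpha)_* C$, together with the defining property that $\cl(\alpha_\mu)\cdot \xi^{i-1}$ equals the rigidified component of $\cl(\alpha)$, the three terms of $\cl(p(\alpha))$ match exactly the three components of $\cl(\alpha)$ supplied by parts~(2) and~(3) of Proposition~\ref{prop-A}. Hence $\cl(p(\alpha)) = \cl(\alpha)$, so $\alpha - p(\alpha) \in \Ch^i(X)^0$ and we obtain the decomposition $\alpha = (\alpha - p(\alpha)) + p(\alpha)$ in the claimed form.

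Finally I would verify the direct-sum content: if $v = C \odot \beta_1 + \eta \odot \beta_2 + \gamma\xi^{i-1}$ with $\gamma \in \Ch^1(X)_{\rig}$ lies in $\Ch^i(X)^0$, then applying $\cl$ and using that the three summands of Proposition~\ref{prop-A}(2) meet trivially in $A^i(X)$ forces $\cl(\beta_1)=0$, $\cl(\beta_2)=0$, and $\cl(\gamma)\cdot \xi^{i-1}=0$. The last, together with Proposition~\ref{prop-A}(3) and the bijection $\Ch^1(X)_{\rig} \iso A^1(X)_{\rig}$, upgrades to $\gamma = 0$ on the nose, while the first two say only that $\beta_j$ is homologically trivial on its respective factor.

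The main obstacle is interpretive rather than computational: since $C \odot \Ch^i(S)^0 \subset \Ch^i(X)^0$ and $\eta \odot \Ch^{i-1}(S)^0 \subset \Ch^i(X)^0$, the two middle summands overlap $\Ch^i(X)^0$ non-trivially, so the $\oplus$-notation must be read as encoding the uniqueness of the three coefficient maps $\alpha_*(\eta)$, $\alpha_* C$, $\alpha_\mu$ rather than a strict direct sum of subgroups. The value of the explicit formula for $p$ is precisely that it supplies one canonical such splitting, absorbing this ambiguity uniformly into $\Ch^i(X)^0$; only the $\Ch^1(X)_{\rig}\xi^{i-1}$-component is split off from $\Ch^i(X)^0$ for free, via the bijection cited above.
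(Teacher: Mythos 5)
Your proposal is correct and is essentially the paper's own argument made explicit: the paper proves the proposition with the one-line remark that parts (2) and (3) of Proposition~\ref{prop-A}, combined with the bijection $\Ch^1(X)_{\rig}\iso A^1(X)_{\rig}$, yield the decomposition, which is exactly your projector $p(\alpha)=C\odot\alpha_*(\eta)+\eta\odot\alpha_*C+\alpha_\mu\xi^{i-1}$ together with the observation $\cl(p(\alpha))=\cl(\alpha)$. Your closing caveat --- that the $\oplus$ must be read as uniqueness of the three coefficient maps rather than a strict direct sum of subgroups, since $C\odot\Ch^i(S)^0$ and $\eta\odot\Ch^{i-1}(S)^0$ lie in $\Ch^i(X)^0$ --- is an accurate reading of how the statement is used later (e.g.\ in Corollary~\ref{cor-ChJ}), not a deviation from the paper.
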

  
  \begin{remark}[Functoriality and correspondences]\label{rem-fc} Let $$f=f_{C'}\times f_{S'}: X':=C'\times S'\lra C\times S$$ be a generically finite morphism with $C'$ and $S'$ smooth and projective.  Then we have group homomorphisms
 $$f_*: \Ch^*(X')\lra \Ch^*(X), \qquad f^*: \Ch^*(X)\lra \Ch^*(X').$$
 Let $\xi', \eta'$ be polarizations of $S'$ and $C'$ with 
 $\xi'\in \BQ f^*\xi$ and $\eta'=\BQ f^*\eta$. Then, the above homomorphisms respect the decomposition in Proposition \ref{prop-Ch}.
 Moreover, we have two  actions  by the correspondence $\alpha$: 
  $$\Ch^3(X\times X)\lra \End (\Ch^*(X)):  \qquad \alpha \mapsto \alpha_*, \quad \alpha \mapsto \alpha ^*.$$
  One is a $\BQ$-algebra  homomorphism; the other is a $\BQ$-algebra  anti-homomorphism.
  Let $\Ch^1(C\times C)_\rig$ (resp. $\Ch^2(S\times S)_\rig$) be the subgroup of $\Ch^1(C\times C)$ (resp. $\Ch^2 (S\times S)$)
  of elements $\alpha$ such that both $\alpha _*$ and $\alpha ^*$ fix the line $\BQ\eta$ (resp. $\BQ \xi$).
  Let  $\Ch^3(X\times X)_\rig$ denote the product $\Ch^1(C\times C)_\rig \boxtimes \Ch^2(S\times S)_\rig $. 
Then, the above actions restricted to  $\Ch^3(X\times X)_\rig$ will respect the decomposition in Proposition \ref{prop-Ch}.
 \end{remark}

We want to define some ``intermediate   Abel--Jacobi maps"
$\AJ ^i: \Ch^i(X)^0\lra J^i(X)$ with actions by $A^*(X)$.
For $i=1$, we take $J^1(X)=\Pic ^0(X)(K)_\BQ$;  for $i=3$, we take 
$J^3(X):=\Alb (X)(K)_\BQ$; 
for $i=2$, we  define the  modified group by 
$$J^2(X)=
\Ch^2(X)^0/\sum_f f_*(\Ch^1(X')^0\cdot \Ch^1(X')^0).$$
where $f$ runs  generically finite morphisms 
\begin{equation}
\label{gf}
f=f_{C'}\times f_{S'}: X'=C'\times S'\lra X=C\times S
\end{equation} with 
$C'$ and $S'$ smooth and projective curves and surfaces, respectively.
Notice that for an embedding $K\subset \BC$, when $i=2$, there is a Griffiths' intermediate Jacobian defined by Hodge theory:
$$J^i_G(X_\BC):=F^iH^{2i-1}(X(\BC), \BC)\bs H^{2i-1}(X(\BC), \BC)/H^{2i-1}(X(\BC), \BZ).$$
There will be a map. 
$$J^i(X_\BC)\lra J^i_G(X(\BC))\otimes \BQ.$$
By definition, this is an isomorphism when $i=1$ or $3$. When $i=2$ and $K=\bar\BQ$,
 by a conjecture of Beilinson \cite[p.18]{Be}, the restriction on $J^2(X)$ of this map  is injective.

\begin{prop} 
The intersection pairing on $\Ch^*(X)$ induces an action:
$$A^i(X)\otimes J^j(X)\lra J^{i+j}(X).$$
\end{prop}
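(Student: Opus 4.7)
The plan is to define $\alpha\cdot[\wt\beta]:=[\wt\alpha\cdot\wt\beta]$ for any lifts $\wt\alpha\in\Ch^i(X)$ of $\alpha$ and $\wt\beta\in\Ch^j(X)^0$ of $[\wt\beta]$, then verify well-definedness. The product lies in $\Ch^{i+j}(X)^0$ automatically, since $\cl(\wt\alpha\cdot\wt\beta)=\alpha\cdot 0=0$, so the task is to show the image in $J^{i+j}(X)$ is independent of both lifts. Since $J^j$ is defined only for $j\in\{1,2,3\}$, the nontrivial pairings are $(i,j)\in\{(1,1),(1,2),(2,1)\}$. In the case $(1,1)$, any ambiguity yields a difference in $\Ch^1(X)^0\cdot\Ch^1(X)^0$, killed in $J^2(X)$ by taking $f=\mathrm{id}_X$ in the definition. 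For $(1,2)$ and $(2,1)$, the projection formula $\wt\alpha\cdot f_*(\gamma_1\cdot\gamma_2)=f_*(f^*\wt\alpha\cdot\gamma_1\cdot\gamma_2)$ together with the functoriality $\AJ_X\circ f_*=f_*\circ\AJ_{X'}$ reduces both forms of well-definedness to the single key claim that $\AJ_X(\Ch^1(X)^0\cdot\Ch^2(X)^0)=0$ in $\Alb(X)_\BQ$.

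To prove this claim, I would use the decompositions $\Pic^0(X)=\pi_C^*\Jac(C)\oplus\pi_S^*\Pic^0(S)$ and $\Alb(X)=\Jac(C)\oplus\Alb(S)$, and treat the two types of divisors separately. For $\alpha=\pi_C^*\alpha_C$: the $\Jac(C)$-component of $\AJ(\alpha\cdot\beta)$ equals $\alpha_C\cdot\pi_{C*}\beta$ and vanishes because $\pi_{C*}\beta\in\Ch^0(C)=\BQ$ has zero cohomology class; the $\Alb(S)$-component is the image of $\alpha_C$ under the correspondence homomorphism $\beta_*\colon\Jac(C)\to\Alb(S)$, which, as a morphism of abelian varieties, is determined by its action on $H^1$ and hence vanishes because $\cl(\beta)=0$. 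The case $\alpha=\pi_S^*\alpha_S$ kills the $\Jac(C)$-component symmetrically via the dual correspondence $\beta^*\colon\Pic^0(S)\to\Jac(C)$.

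The main obstacle is the remaining $\Alb(S)$-component in this second case, namely $\AJ_S(\alpha_S\cdot\pi_{S*}\beta)$ with both factors in $\Pic^0(S)$. I would invoke the classical vanishing $\AJ_S(\Pic^0(S)\cdot\Pic^0(S))=0$ on any smooth projective surface $S$. The plan: writing each factor as a pullback $\phi^*\wt\alpha_i$ via the Albanese morphism $\phi\colon S\to\Alb(S)$, the projection formula yields $\phi_*(\alpha_1\cdot\alpha_2)=\wt\alpha_1\cdot\wt\alpha_2\cdot\phi_*[S]$, and the Beauville decomposition of $\Ch^*(\Alb(S))$ places this product in the weight $\geq 2$ subspace of $\Ch^g(\Alb(S))$, which lies in the kernel of the sum map $\Ch^g(\Alb(S))^0\to\Alb(S)$. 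An alternative route is Hodge-theoretic: the differential at $0$ of $\Pic^0(S)\otimes\Pic^0(S)\to\Alb(S)$ factors through the cup product $H^1(S,\CO_S)\otimes H^1(S,\CO_S)\to H^2(S,\CO_S)$, which has the wrong Hodge type to map into $T_0\Alb(S)=H^0(S,\Omega_S^1)^*$.
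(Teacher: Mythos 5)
Your overall strategy coincides with the paper's: everything is reduced to the single vanishing statement $\AJ(\alpha\cdot\beta)=0$ for $\alpha\in\Ch^1(X)^0$, $\beta\in\Ch^2(X)^0$, which is exactly the paper's Lemma \ref{lem-JCh}, and your case analysis $(1,1),(1,2),(2,1)$ together with the projection-formula reduction is a correct spelling-out of what the paper leaves implicit (note only that the key claim must be invoked on the auxiliary products $X'=C'\times S'$ appearing in the definition of $J^2(X)$, not just on $X$ itself; your argument applies there verbatim). Where you diverge is the proof of the key claim. The paper proves it in one stroke, using no product structure at all: for fixed $\beta$ the assignment $\alpha\mapsto\AJ(\alpha\beta)$ is a homomorphism of abelian varieties $\Pic^0(X)\lra\Alb(X)$, hence determined by its action on $H^1$, and that action is cup product with $\cl(\beta)=0$. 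You use this very argument for the mixed components of your decomposition of $\Pic^0(X)$ and $\Alb(X)$, but then treat the remaining component $\AJ_S\bigl(\Pic^0(S)\cdot\Pic^0(S)\bigr)$ as a separate classical fact; it is handled by the identical argument (fix $\alpha_2=\pi_{S*}\beta\in\Pic^0(S)$ and observe that the homomorphism $\alpha_1\mapsto\AJ_S(\alpha_1\cdot\alpha_2)$ acts on $H^1$ by cup product with $\cl(\alpha_2)=0$), so the decomposition and the detour buy nothing.

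Two caveats about the detour itself. The Beauville route is correct but not self-contained as written: the assertion that the product lies in the weight $\ge 2$ part of $\Ch^g(\Alb(S))$ presupposes that $\phi_*[S]\in\Ch^{g-2}(\Alb(S))$ has no components of negative Beauville weight. That nonnegativity is an open conjecture in general codimension; it is a theorem precisely in the range you need (cycles of dimension at most $2$, via the Fourier transform, which carries $\Ch^{g-2}_{(s)}$ to $\Ch^{2+s}_{(s)}$ of the dual abelian variety, reducing to divisors and the theorem of the cube), so this input must be cited explicitly rather than attributed to ``the Beauville decomposition.'' The Hodge-theoretic alternative, as phrased, does not prove anything: the differential at $0$ of a bi-additive map is automatically zero, so one must fix one variable, and the honest computation of the resulting homomorphism is again its action on $H^1$ (equivalently, rigidity for bi-additive morphisms of abelian varieties); moreover a vanishing-differential argument is confined to characteristic $0$, whereas \S\ref{sec-htc} is over an arbitrary algebraically closed field and the function-field applications may be in positive characteristic. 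With the last component handled by the $H^1$ argument you already use elsewhere, your proof is complete and agrees in substance with the paper's.
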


\begin{proof} This follows immediately from the following Lemma.
\end{proof}

\begin{lem}\label{lem-JCh}
 Let $\alpha\in \Ch^1(X)^0$ and $\beta \in \Ch^2(X)^0$. Then $\AJ (\alpha\beta)=0$.
\end{lem}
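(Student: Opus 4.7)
The plan is to reduce the assertion, via a K\"unneth-style decomposition of $\Pic^0$, to two facts: the correspondence action induced by a homologically trivial cycle is trivial on $H^1$, and any bi-additive morphism between abelian varieties vanishes.

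Since $X=C\times S$ with $C$ a curve and $S$ a surface, the seesaw principle gives
$$\Pic^0(X)_\BQ=\pi_C^*\Pic^0(C)_\BQ\oplus\pi_S^*\Pic^0(S)_\BQ,$$
so I will write $\alpha=\pi_C^*a+\pi_S^*b$ with $a\in\Pic^0(C)_\BQ$ and $b\in\Pic^0(S)_\BQ$. Dually, $\Alb(X)=\Jac(C)\times\Alb(S)$, so checking $\AJ(\alpha\cdot\beta)=0$ reduces to checking that both $\pi_{C*}(\alpha\cdot\beta)\in\Ch^1(C)^0=\Jac(C)_\BQ$ and $\AJ_S(\pi_{S*}(\alpha\cdot\beta))\in\Alb(S)_\BQ$ vanish. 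The projection formula breaks these into four terms: two ``matching'' ones, $\pi_{C*}(\pi_C^*a\cdot\beta)=a\cdot\pi_{C*}\beta$ and $\pi_{S*}(\pi_S^*b\cdot\beta)=b\cdot\pi_{S*}\beta$, and two ``crossed'' correspondence ones, $\pi_{S*}(\pi_C^*a\cdot\beta)=\beta_*(a)$ and $\pi_{C*}(\pi_S^*b\cdot\beta)=\beta^*(b)$.

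Three of the four are immediate. The cycle $\pi_{C*}\beta$ is a homologically trivial element of $\Ch^0(C)=\BZ$, hence zero, so the first matching term vanishes. For the crossed terms, the triviality of $\cl(\beta)\in H^4(X)$ in particular kills its K\"unneth component in $H^1(C)\otimes H^3(S)$; under Poincar\'e duality on $S$ this component represents the induced morphisms of abelian varieties $\beta_*:\Jac(C)\to\Alb(S)$ and $\beta^*:\Pic^0(S)\to\Jac(C)$, so both morphisms are zero and the crossed terms Abel--Jacobi to $0$.

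The main obstacle is the second matching term $b\cdot\pi_{S*}\beta$, in which both factors lie in $\Pic^0(S)_\BQ$ (note $\pi_{S*}\beta\in\Ch^1(S)^0$ because $\beta$ is homologically trivial). To handle it I will view
$$\Phi:\Pic^0(S)\times\Pic^0(S)\lra\Alb(S),\qquad (b,d)\mapsto\AJ_S(b\cdot d),$$
as a bi-additive morphism of abelian varieties---morphic in each variable by applying the universal property of the Albanese to the algebraic family of $2$-cycles $\{b\cdot d\}$ parametrized by the Picard scheme, and bi-additive because intersection and $\AJ_S$ are $\BZ$-linear. Since $\Phi$ vanishes whenever either argument is $0$, Mumford's rigidity lemma for abelian varieties forces $\Phi\equiv 0$, and the lemma follows.
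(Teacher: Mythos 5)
Your argument is correct in substance but follows a genuinely different route from the paper. The paper proves the lemma in one stroke: the assignment $\alpha\mapsto\AJ(\alpha\cdot\beta)$ is a homomorphism of abelian varieties $\varphi_\beta:\Pic^0(X)\to\Alb(X)$, its induced map on $H^1$ is cup product with $\cl(\beta)=0$, and a homomorphism of abelian varieties inducing zero on $H^1$ (equivalently on Tate modules) vanishes; no K\"unneth or seesaw decomposition is needed. You instead split $\Pic^0(X)=\pi_C^*\Pic^0(C)\oplus\pi_S^*\Pic^0(S)$ and $\Alb(X)=\Jac(C)\times\Alb(S)$ and treat four terms: the two crossed terms are disposed of by exactly the paper's mechanism (the correspondence $\beta$ induces homomorphisms $\Jac(C)\to\Alb(S)$ and $\Pic^0(S)\to\Jac(C)$ which act by a K\"unneth component of $\cl(\beta)=0$ on $H^1$, hence vanish), while the remaining term $\AJ_S(b\cdot\pi_{S*}\beta)$ is killed by a rigidity argument for the bi-additive pairing $\Phi:\Pic^0(S)\times\Pic^0(S)\to\Alb(S)$. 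This works, and it has the virtue of isolating the genuinely bilinear phenomenon (that $\Pic^0(S)\cdot\Pic^0(S)$ dies in $\Alb(S)$), but it is longer and partly re-derives the paper's argument along the way; indeed this last case also succumbs to the same $H^1$ computation you already use for the crossed terms, since for fixed $d=\pi_{S*}\beta\in\Ch^1(S)^0$ the homomorphism $b\mapsto\AJ_S(b\cdot d)$ acts on $H^1$ by cup product with $\cl(d)=0$. One point to tighten: Mumford's rigidity lemma requires $\Phi$ to be a morphism on the product, whereas you only justify morphicity in each variable separately; either parametrize the family $\{b\cdot d\}$ by the product of the two Picard schemes (a Poincar\'e bundle in each factor does this), or avoid rigidity altogether by noting that each $\Phi(b,\cdot)$ lies in the finitely generated group $\Hom(\Pic^0(S),\Alb(S))$, so the additive map $b\mapsto\Phi(b,\cdot)$ from the divisible group $\Pic^0(S)(K)$ must vanish.
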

\begin{proof}
For any $\beta\in \Ch^2(X)$, the map  $\alpha\mapsto \AJ (\alpha\beta)$ defines a morphism of abelian groups 
$\varphi_\beta: \Pic ^0(X)(K)\lra \Alb (X)(K)$. We claim that a morphism of abelian varieties induces this.
Using duality, $\Alb (X)=\Pic ^0(\Pic ^0(X))$,  we need to construct a line bundle $\CM$ on $\Pic ^0(X)\times \Pic ^0(X)$  so that $\varphi_\beta (\alpha)$ is given the restriction of $\CM$ on 
$\{\alpha\}\times \Pic^0(X)$. 

Fixed base point $x\in X$, then we have a universal bundle 
$\CP$ on $X\times \Pic ^0(X)$ with trivializations on $X\times \{0\}$ and $\{x\}\times \Pic ^0(X)$ so that for any $\alpha \in\Pic ^0(X)$, the restriction of $\CP$ on $X\times \times \{\alpha\}$ is a line bundle representing $\alpha$. 
Then consider variety $X\times \Pic ^0(X)\times \Pic ^0(X)$  and cyles $p_{12}^*c_1(\CP)$, $p_{13}^*c_1(\CP)$ and $p_1^*\beta$ defined by projections to the products $X\times \Pic ^0(X)$ and $X$ respectively. 
So we get a class
$$c_1(\CM):=p_{23*} (p_{12}^*c_1(\CP)\cdot p_{13}^*c_1(\CP)\cdot p_1^*\beta)\in \Ch^1(\Pic ^0(X)\times \Pic^0(X)).$$
This is the class inducing $\varphi_\beta$. 

This class of $\CM$ certainly has a trivial restriction on $\{0\}\times \Pic^0(X)$ and 
$\Pic^0(X)\otimes \{0\}$. Thus, it is determined by its cohomological class in $H^2(\Pic ^0(X)\times \Pic^0(X))$ which can be computed by the same formula 
$$\cl ^1c_1(\CM)=p_{23*} (p_{12}^*\cl^1c_1(\CP)\cdot \cl^1 p_{13}^*c_1(\CP)\cdot p_1^*\cl^2\beta).$$
Thus $\CM$ is trvial if $\cl^2\beta=0$. Thus $\beta \in \Ch^2(X)^0$ implies that $\CM=0$ and that $\varphi_\beta=0$.
\end{proof}

 \begin{remark} Let $f: X'\lra X$ be as in (\ref{gf}). Then we have morphisms
 $$f_*: J^*(X')\lra J^*(X), \qquad f^*: J^*(X)\lra J^*(X').$$
 More generally,  we have two actions by correspondence: 
  $$\Ch^3(X\times X)\lra \End (J^*(X)):  \qquad \alpha \mapsto \alpha_*, \quad \alpha \mapsto \alpha ^*.$$
  We don't know if these actions factor through the action by   $A^*(X\times X)$ in general. However, if $K=\bar \BQ$, then
  by  Beilinson's conjecture, $J^i(X)$ can embed into the Griffith intermediate Jacobian $J_G^i(X(\BC))\otimes \BQ$,
  Thus, the action of $\Ch^3(X\times X)$ conjecturally factors through action by   $A^3(X\times X)$ in this case.
  \end{remark}

Write $\CL=\xi\boxtimes\eta$ as an ample line bundle on $X$. The intersection with $c_1(\CL)$ then defines an operator 
$$\sL: J^i(X)\lra J^{i+1}(X).$$
Notice that $\sL^2: J^1(X)\lra J^3(X)$ is a polarization. 
For $J^2(X)$, let $J^2(X)_0$ denote the kernel of $\sL$ called the primitive part of $J^2(X)$. Then, we have a decomposition:
$$J^2(X)=\sL J^1(X)\oplus J^2(X)_0.$$

For simplicity, we will  work with  the  subgroup  $J^2(X)_{00}$ of $J^2(X)_{0}$ of elements annihilated by both $\pi_C^*\eta$ and $\pi_S^*\xi$. 
We call this the group of the bi-primitive cycles. 
These conditions are  easy to describe by noticing that 
$$J^3(X)\iso \Alb (C)\times \Alb (S): \qquad \alpha\mapsto (\pi_{C_*}\alpha, \pi_{S*}\alpha).$$
Thus an element $\alpha \in J^2(X)$ is in $J^2(X)_{00}$ if and only if 
$$\alpha _*(\eta) =0, \quad \eta\cdot \alpha^*S=0, \quad \alpha ^*(\xi)=0, \quad \alpha _*(C)\cdot \xi=0.$$
Since $\alpha^*S=0$ always true, and $\alpha_*(C)\xi=0$ is equivalent to $\alpha_*(C)=0$, we need only check the following three conditions:
$$\alpha ^*(\xi)=0, \qquad \alpha _*(\eta) =0,   \quad \alpha _*(C)=0.$$
Thus, we have proved the following:
\begin{prop} \label{prop-J} Let $\xi^\vee=\xi/(\deg \xi^2)$. Then, we have a decomposition:
$$J^2(X)=J^2(X)_{00} \oplus [C]\boxtimes \Alb(S)\oplus \eta \boxtimes \Pic ^0(S)\oplus \Pic^0 (C)\boxtimes \xi^\vee,$$
such that the projection to the last three components is given by 
$$\alpha\mapsto [C]\boxtimes \alpha_*(\eta)+\eta\boxtimes \alpha_*([C])+\alpha^*(\xi)\boxtimes \xi^\vee.$$
\end{prop}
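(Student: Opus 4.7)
The plan is to exhibit the decomposition by constructing an idempotent endomorphism of $J^2(X)$. Set
$$r(\alpha) := C \odot \alpha_*(\eta) + \eta \odot \alpha_*(C) + \alpha^*(\xi) \odot \xi^\vee,$$
which matches the projection formula in the statement. I will verify (i) $r$ is well-defined on $J^2(X)$, (ii) $r$ is idempotent, and (iii) $\ker(r) = J^2(X)_{00}$; together these three facts immediately yield the claimed direct-sum decomposition with the prescribed projection.

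For (i), homological triviality of $\alpha$ forces $\alpha_*(\eta)\in\Ch^2(S)^0$, $\alpha_*(C)\in\Pic^0(S)_\BQ$, and $\alpha^*(\xi)\in\Pic^0(C)_\BQ$, so each summand on the right-hand side already lies in $\Ch^2(X)^0$. To check that $r$ factors through the quotient $\Ch^2(X)^0/\sum_f f_*(\Ch^1(X')^0\cdot\Ch^1(X')^0)$, take $\alpha=f_*(ab)$ with $f\colon X'=C'\times S'\to X$ and $a,b\in\Ch^1(X')^0$. The first and third coordinates reduce, via functoriality of the Abel--Jacobi map under pushforward, to $(\pi_S f)_*\AJ_{X'}(a\cdot(b\cdot f^*\pi_C^*\eta))$ and $(\pi_C f)_*\AJ_{X'}(a\cdot(b\cdot f^*\pi_S^*\xi))$; both vanish by Lemma \ref{lem-JCh} applied on $X'$, since the second factor in each product lies in $\Ch^2(X')^0$. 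The middle coordinate $(\pi_S f)_*(ab)\in\Pic^0(S)_\BQ$ is handled by an adaptation of the proof of Lemma \ref{lem-JCh}: the morphism of abelian varieties $\Pic^0(X')\to\Pic^0(S)$, $a\mapsto (\pi_S f)_*(ab)$, is induced on $H^1$ by cup product with the class $[b]=0$ and is therefore zero.

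Step (ii) is a direct intersection computation for each basic element. Using $\pi_{S*}\pi_C^*\eta=\deg(\eta)\cdot [S]=[S]$, the identity $\eta^2=0$ in $\Ch^2(C)=0$, and $\deg(\xi^\vee\cdot\xi)=1$, one obtains $r(C\odot\beta)=C\odot\beta$, $r(\eta\odot\gamma)=\eta\odot\gamma$, and $r(\delta\odot\xi^\vee)=\delta\odot\xi^\vee$, with all cross-terms vanishing by elementary degree reasons. Step (iii) is then immediate from the characterization of $J^2(X)_{00}$ in the paragraph preceding the statement, since $r(\alpha)=0$ exactly when $\alpha_*(\eta)$, $\alpha_*(C)$, and $\alpha^*(\xi)$ all vanish. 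The main subtlety in the whole argument is the middle-component well-definedness in step (i): Lemma \ref{lem-JCh} does not apply directly to $(\pi_S f)_*(ab)\in\Pic^0(S)$, and one must instead invoke the $H^1$-vanishing principle for morphisms of abelian varieties, which is the same device that drives Lemma \ref{lem-JCh} itself.
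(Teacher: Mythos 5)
Your route is the same as the paper's implicit one: the paper's proof consists of the identification, in the paragraph before the statement, of $J^2(X)_{00}$ by the three conditions $\alpha_*(\eta)=\alpha_*(C)=\alpha^*(\xi)=0$, together with exactly the projector computations you carry out in step (ii); those computations, and your treatment of the second and third coordinates in step (i), are correct.

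There is, however, a genuine gap in the first coordinate, and you have in fact mislocated the ``main subtlety.'' For $\alpha=f_*(ab)$, Lemma \ref{lem-JCh} gives only $\AJ_S(\alpha_*(\eta))=0$ in $\Alb(S)$; it does not make the Chow class $\alpha_*(\eta)\in\Ch^2(S)^0$ vanish, and that class may well be a nonzero element of the Albanese kernel of $S$. Since the first term of your $r$ is $C\odot\alpha_*(\eta)$, formed from a Chow representative, neither the descent of $r$ to $J^2(X)$ in step (i) nor the inclusion $J^2(X)_{00}\subset\ker(r)$ in step (iii) follows as stated: both require that every class $C\odot\beta$ with $\beta\in\Ch^2(S)^0$ and $\AJ_S(\beta)=0$ vanish in $J^2(X)$, i.e.\ lie in $\sum_f f_*(\Ch^1(X')^0\cdot\Ch^1(X')^0)$, and nothing in your argument (or in Lemma \ref{lem-JCh}) provides this; when $p_g(S)>0$ the Albanese kernel is enormous and such a containment is far from formal. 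By contrast the middle coordinate, which you single out as the delicate one, is harmless precisely because $\Ch^1(S)^0=\Pic^0(S)_\BQ$, so vanishing in $\Pic^0(S)$ already is vanishing of the Chow class (similarly for the third coordinate, $\AJ$ being an isomorphism on $\Ch^1(C)^0$). What your steps actually prove, and what matches the paper's argument, is that the triple $\alpha\mapsto(\AJ_S\alpha_*(\eta),\alpha_*(C),\alpha^*(\xi))\in\Alb(S)\oplus\Pic^0(S)\oplus\Pic^0(C)$ is well defined on $J^2(X)$, has kernel $J^2(X)_{00}$, and is split by $(\beta,\gamma,\delta)\mapsto C\odot\beta+\eta\odot\gamma+\delta\odot\xi^\vee$; the remaining assertion that the complement can be taken to be the canonical subgroups $C\odot\Alb(S)$, $\eta\odot\Pic^0(S)$, $\Pic^0(C)\odot\xi^\vee$ (equivalently, that $C\odot\ker(\AJ_S)$ dies in $J^2(X)$, so the sum with $J^2(X)_{00}$ is direct) is exactly the unproved containment above -- a point the paper itself leaves implicit, but which your write-up presents as established.
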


\begin{remark}
 Let $f: X'\lra X$ and  $\xi', \eta'$ be as in Remark \ref{rem-fc}. Then the maps 
  $$f_*: J^*(X')\lra J^*(X), \qquad f^*: J^*(X)\lra J^*(X')$$
 respect to the above decomposition. So do the actions by correspondences
  in   $\Ch^3(X\times X)_\rig$ defined in Remark \ref{rem-fc}. 
 \end{remark}

Combining with Proposition \ref{prop-A}, we obtain the following:
\begin{cor} \label{cor-ChJ}
The inclusion $$J^2(X)_{00}\lra \Ch^2(X)/\sum _f f_*(\Ch^1(X')^0\cdot \Ch^1(X')^0)$$ has a retraction 
given by
$$\alpha\mapsto \alpha-[C]\boxtimes \alpha_*(\eta)-\eta\boxtimes \alpha_*([C])-\mu_\alpha \xi-(\alpha^*(\xi)-\deg \alpha^*\xi\cdot \eta)\boxtimes \xi^\vee.$$
\end{cor}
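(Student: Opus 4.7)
My plan is to view the formula as the composition of two natural projections, corresponding to Propositions~\ref{prop-Ch} and~\ref{prop-J}. Given $\alpha\in\Ch^2(X)$, set
$$\alpha^0:=\alpha-C\odot\alpha_*\eta-\eta\odot\alpha_*C-\mu_\alpha\xi.$$
By Proposition~\ref{prop-Ch} (the Chow refinement of Proposition~\ref{prop-A}), the cohomology class of $\alpha^0$ vanishes, so $\alpha^0\in\Ch^2(X)^0$. The desired retraction is then obtained by further projecting $\alpha^0$, viewed in $J^2(X)$, onto the bi-primitive summand $J^2(X)_{00}$ of the decomposition in Proposition~\ref{prop-J}.

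The key calculation is the Chow-level computation of the push-pulls of $\alpha^0$. Using the projection formula, the dimension-based vanishings ($\eta\cdot\eta=0$ on $C$, and $\xi\cdot\gamma=0$ in $\Ch^3(S)=0$ for $\gamma\in\Ch^2(S)$), the rigidifications $(\mu_\alpha)_*\eta=0$ and $\mu_\alpha^*\xi^2=0$, and the derived identity $\pi_{S*}\mu_\alpha=0$ in $\Ch^0(S)=\BQ$ (which follows from $0=\deg\mu_\alpha^*\xi^2=c\cdot\deg\xi^2$ where $\pi_{S*}\mu_\alpha=c\cdot[S]$, using $\deg\xi^2>0$), I expect to obtain
$$(\alpha^0)_*\eta=0,\quad(\alpha^0)_*C=0,\quad(\alpha^0)^*\xi=\alpha^*\xi-\deg(\alpha^*\xi)\cdot\eta,$$
the third using $\deg(\alpha^*\xi)=\deg(\alpha_*C\cdot\xi)$, both equal to the intersection number $\alpha\cdot\pi_S^*\xi$ on $X$. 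The last quantity lies in $\Pic^0(C)$, so the projection in Proposition~\ref{prop-J} subtracts only the term $(\alpha^*\xi-\deg\alpha^*\xi\cdot\eta)\odot\xi^\vee$, producing exactly the formula of the corollary.

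Given these computations, the output $\alpha'$ of the formula lies in $\Ch^2(X)^0$ and satisfies the three bi-primitive conditions $\alpha'_*\eta=\alpha'_*C=\alpha'^*\xi=0$ in the Chow groups, hence defines a class in $J^2(X)_{00}$. For the retraction property, take $\alpha\in\Ch^2(X)^0$ lifting a class in $J^2(X)_{00}$. Then $[\alpha]=0$ in $A^2(X)$ forces $[\mu_\alpha]=0$ in $A^1(X)_{\rig}$, and among cohomologically trivial classes in $\pi_C^*\Pic(C)+\pi_S^*\Pic(S)$ the Chow rigidifications $(\mu_\alpha)_*\eta=\mu_\alpha^*\xi^2=0$ force $\mu_\alpha=0$ outright (by the same two-parameter argument used to prove uniqueness in Proposition~\ref{prop-A}). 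The three remaining subtracted terms represent images in the summands $C\odot\Alb(S)$, $\eta\odot\Pic^0(S)$, $\Pic^0(C)\odot\xi^\vee$ of $J^2(X)$, and all vanish there by the bi-primitive hypotheses on $\alpha$, giving $\alpha'=\alpha$ in $J^2(X)_{00}$.

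The main obstacle is the verification that the formula descends to the quotient by $\sum_f f_*(\Ch^1(X')^0\cdot\Ch^1(X')^0)$: when $\alpha$ is itself such a pushforward $f_*(a\cdot b)$ with $a,b\in\Pic^0(X')$, one must check each of the four subtracted components represents zero in the respective summand of $J^2(X)$. This reduces, via the functoriality of correspondences in Remark~\ref{rem-fc} combined with Lemma~\ref{lem-JCh} applied on $X'$, to showing that the push-pulls $\alpha_*\eta$, $\alpha_*C$, $\alpha^*\xi$ are themselves pushforwards of products of $\Pic^0$-classes, hence have vanishing Albanese/Picard images on $S$ and $C$; this combined with $\mu_\alpha=0$ for hom-trivial $\alpha$ yields $r(\alpha)=0$ in $J^2(X)$.
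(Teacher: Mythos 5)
Your proposal is correct and takes essentially the paper's own route: the corollary is stated there as a direct combination of Propositions \ref{prop-Ch} and \ref{prop-J}, and your push--pull computations (the vanishing $(\alpha^0)_*\eta=(\alpha^0)_*C=0$ via $\eta^2=0$, $\Ch^3(S)=0$, the rigidifications of $\mu_\alpha$ and $\pi_{S*}\mu_\alpha=0$, together with $\deg\alpha^*\xi=\deg(\alpha_*C\cdot\xi)$) simply make that combination explicit, and your retraction check matches the bi-primitivity conditions. The descent issue you single out at the end is already subsumed in Proposition \ref{prop-J} as stated (its projection formula is given on Chow representatives, hence kills $\sum_f f_*(\Ch^1(X')^0\cdot\Ch^1(X')^0)$), so nothing beyond what you wrote is required.
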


Let $\NS(S)_0$ denote the orthogonal complement of $\xi$ in $\NS(S)$ under the intersection pairing $\pair{\cdot, \cdot}_\NS$. The  $J^2(X)_{00}$ can be decomposed into the subspace $J^2(X)_{000}$ annihilated by all elements in $\pi_S^*\NS(S)_0$. Then, we have a further decomposition.

\begin{prop}\label{prop-000} There is a decomposition 
$$J^2(X)_{00}=J^2(X)_{000}\oplus \Pic^0(C)\boxtimes \NS(S)_0$$
so that the projection to the second component is given as follows:
$$\alpha \mapsto \sum _i \alpha ^*(h_i)\boxtimes h_i^\vee $$
where  $\{h_1, \cdots h_t\}$ is a base of $\NS(S)_0$, and  $\{h_1^\vee, \cdots, h_t^\vee\}$ its dual base
in the sense that $\pair{h_i, h_j^\vee }_\NS=\delta _{ij}$.
\end{prop}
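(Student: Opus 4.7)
The plan is to exhibit the stated formula $p(\alpha)=\sum_i\alpha^*(h_i)\odot h_i^\vee$ as a retraction $J^2(X)_{00}\twoheadrightarrow \Pic^0(C)\odot \NS(S)_0$ whose kernel is $J^2(X)_{000}$, where $\iota:\Pic^0(C)\odot \NS(S)_0\lra J^2(X)$ denotes the external product map. Four items will be verified in turn: that $\iota$ factors through $J^2(X)_{00}$, that $p$ descends to the quotient, that $p\circ\iota=\mathrm{id}$, and that $\ker p=J^2(X)_{000}$.

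The first and third items rest on the projection-formula identities
$$(\beta\odot g)^*(h)=\pair{g,h}_\NS\cdot \beta, \qquad (\beta\odot g)_*(\delta)=\deg(\beta\cdot\delta)\cdot g,$$
valid for $\beta\in\Ch^1(C)$, $g,h\in\Ch^1(S)$, $\delta\in\Ch^*(C)$. Substituted into the three conditions of Proposition~\ref{prop-J} characterizing $J^2(X)_{00}$, all vanish when $\beta\in\Pic^0(C)$ and $g\in\NS(S)_0$, because $\deg\beta=0$, $\pair{g,\xi}_\NS=0$, and $\beta\cdot\eta=0$ on the curve $C$. Applying the same identity with $h=h_i$ gives
$$p(\beta\odot g)=\beta\odot\Bigl(\sum_i\pair{g,h_i}_\NS h_i^\vee\Bigr)=\beta\odot g,$$
the last equality being the dual-basis identity $g=\sum_i\pair{g,h_i}_\NS h_i^\vee$, verified by pairing both sides against each $h_j$ and using $\pair{h_i,h_j^\vee}_\NS=\delta_{ij}$ together with symmetry and nondegeneracy of the intersection form on $\NS(S)_0$. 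This also forces $\iota$ to be injective.

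Well-definedness of $p$ on the quotient $J^2(X)_{00}$ is inherited from the $A^1(X)$-action on $J^*(X)$ constructed in Section~\ref{sec-htc}, applied to $\pi_S^*h_i\in A^1(X)$: the $\Alb(C)_\BQ$-component of $\alpha\cdot \pi_S^*h_i\in J^3(X)=\Alb(X)_\BQ$ is exactly $\alpha^*(h_i)\in \Pic^0(C)_\BQ$. For the kernel, linear independence of $\{h_i^\vee\}$ gives $p(\alpha)=0$ iff $\alpha^*(h)=0$ for every $h\in\NS(S)_0$; and since $\alpha_*(C)=0$ for $\alpha\in J^2(X)_{00}$, the $\Alb(S)$-component of $\alpha\cdot\pi_S^*h$ vanishes automatically, so $\alpha\cdot\pi_S^*h=0$ in $J^3(X)$ is equivalent to $\alpha^*(h)=0$, matching the definition of $J^2(X)_{000}$. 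I do not foresee a genuine obstacle here: the argument is projection-formula bookkeeping plus the standard dual-basis relation, and the only subtlety to watch is that $\alpha^*(h_i)$ descends to the quotient $J^2(X)$---this is exactly what the $A^1(X)$-action on $J^*(X)$ encodes.
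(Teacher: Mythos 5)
Your argument is correct, and in fact the paper states Proposition \ref{prop-000} without any proof (it is presented as an immediate analogue of Proposition \ref{prop-J} and Corollary \ref{cor-ChJ}), so your retraction argument --- verifying that $\beta\odot g$ satisfies the three conditions $\alpha^*(\xi)=\alpha_*(\eta)=\alpha_*(C)=0$ of Proposition \ref{prop-J}, establishing $p\circ\iota=\mathrm{id}$ by the dual-basis identity, and identifying $\ker p$ with $J^2(X)_{000}$ using $\alpha_*(C)=0$ to kill the $\Alb(S)$-component --- is precisely the intended completion, in the same style as the proofs of the neighboring statements. Two points you leave implicit are worth recording explicitly: the dual basis $\{h_i^\vee\}$ must be chosen inside $\NS(S)_0$ itself, which is legitimate because the intersection form on $\xi^\perp$ is negative definite (Hodge index) and hence nondegenerate there; and the map $\Pic^0(C)\odot\NS(S)_0\to J^2(X)$ is independent of the choice of a Chow lift of $g\in\NS(S)_0$, since two lifts differ by an element of $\Pic^0(S)$ and a product $\pi_C^*\beta\cdot\pi_S^*g'$ with both factors in $\Ch^1(X)^0$ vanishes in $J^2(X)$ by its very definition (take $f=\mathrm{id}$). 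Both are one-line remarks and do not affect the validity of your proof.
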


\section{Integral models}\label{sec-it}
In this section, we consider the local situation.
Let $K$ be a discrete valuation field with the valuation ring $R=\CO_K$,  a uniformizer $\pi$, and the residue field $k=R/\pi$.
For an $R$-variety $\CV$ and a closed point $P\in \CV_k$, we say that $\CV$ is  strictly  semistable  at $P$, if the the formal 
 completion of $\CV$ at $P$ admits a   finite \'etale map to the following formal scheme:
$$\Spf R[[x_1, \cdots, x_n]]/(\prod _{i=1}^t x_i-\pi)$$
for some $t\le n$. We say that $\CV$ is strictly semistable if it is strictly semistable at each close point. 
The property of strict semistability does not change if we replace $R$ with an unramified extension. So, in the following,
we assume that $R$ is complete and $k$ is algebraically closed. In this case, the above \'etale map is an isomorphism.

Let $X=C\times S$ be the product of a smooth proper curve and a proper smooth surface over $K$.
Assume that $C$ (resp.  $S$) has a  strictly semistable model $\CC$ (resp. $\CS$) over $R$.
Following Gross and Schoen \cite{GS95} (see also Hartl \cite{Ha}), we have  a semistable model $\CX$ for $X$ by blowing-up $\CX_1:=\CC\times _R\CS$.
More precisely, take any order of the irreducible components of $\CX_{1, k}$:
$C_1^1, \cdots, C_1^t$. Then we define models $\CX_i$ for $i=1, \cdots, t$  as follows:
$\CX_2$ is the blow-up of $\CX_1$ over $C_1$. Then the special fiber of $\CX_2$ is the union of the preimage of  $C_2^i$  of $C_1^i$.
Take $\CX_3$ be the blow-up of $\CX_2$ over $C_2^2$, etc. 

Our main result in this section (Proposition \ref{prop-cb}) is a description of $C_t^i$ in terms of blow-ups from $C_1^i$ without reference to $\CX$.
We need to review the construction of $\CX$ to prove this result. 

\begin{prop}[Hartl \cite{Ha}] \label{prop-ss}The scheme $\CX_t$ is strictly semistable. 
\end{prop}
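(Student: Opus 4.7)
The plan is to verify strict semistability \'etale-locally at each closed point of $\CX_{t,k}$, by reducing to a toric calculation on the formal completion. Fix a closed point $P \in \CX_{t,k}$ with image $(P_C, P_S) \in \CC_k \times \CS_k$. By strict semistability of $\CC$ and $\CS$, I choose formal coordinates giving
\[
\hat\CO_{\CX_1,\,(P_C, P_S)} \cong R[[x_1, \dots, x_a,\, y_1, \dots, y_b]]/(x_1\cdots x_a - \pi,\; y_1\cdots y_b - \pi),
\]
with $a \le 2$, $b \le 3$, silent smooth parameters suppressed. The local components of $\CX_{1,k}$ through $(P_C, P_S)$ are $D_{ij} := V(x_i, y_j)$ for $(i,j)\in[a]\times[b]$, and these are the formal traces of the global components $C_1^k$.

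The key observation is that this local model is the formal completion of an affine toric variety over $R$: the scheme $\Spec k[x_i,y_j]/(\prod x_i - \prod y_j)$ has cone $\sigma \subset N_\BR$ in a lattice $N$ of rank $a+b-1$, with $ab$ rays $v_{ij}$ determined by $\langle v_{ij}, x_{i'}\rangle = \delta_{ii'}$, $\langle v_{ij}, y_{j'}\rangle = \delta_{jj'}$, and these correspond bijectively to the divisors $D_{ij}$. The scheme is strictly semistable at $P$ iff $\sigma$ is regular, which fails precisely when $\min(a,b)\ge 2$. I would then show that the blow-up along the strict transform of $C_{k-1}^{k-1}$, cut out locally by a toric regular sequence $(x_i, y_j)$, is \'etale-locally the toric modification subdividing the star of $v_{ij}$. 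Concretely, on the two standard charts of the blow-up of $(x_i, y_j)$, the substitutions $y_j = x_i s$ and $x_i = y_j t$ turn the pair of relations $\prod x_i = \prod y_j = \pi$ into either a single monomial relation $x_i \cdot s \cdot \prod_{j' \ne j} y_{j'} = \pi$ (which is strictly semistable) or a pair consisting of a monomial relation together with a conifold-type relation $t x_{i'} = y_{j'} y_{j''}$ (to be resolved by subsequent blow-ups of other $D_{i'j'}$).

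Proceeding by induction on $k = 1, \dots, t$, I would verify at each step that the strict transform of $C_k^k$ in $\CX_k$ is still formally-locally cut out by a toric regular sequence of two coordinates, so that the blow-up $\CX_{k+1}\to\CX_k$ is again a toric modification of the claimed shape. After all $t$ stages, the iterated subdivision produces a regular fan --- equivalently, a unimodular triangulation of the product of simplices $\Delta^{a-1}\times\Delta^{b-1}$ --- so every maximal cone is spanned by a subset of a basis of $N$ and every formal local ring of $\CX_t$ is of the strictly semistable form $R[[z_1,\dots, z_n]]/(z_1\cdots z_s - \pi)$. The main obstacle is the detailed bookkeeping for strict transforms through successive blow-ups: tracking which toric ideal cuts out each $C_k^k$ at each closed point, and verifying that the Gross--Schoen-style ordering of the $C_1^k$ preserves this form all the way to the final step. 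This is the technical core of the argument and extends the curve-times-curve analysis of Gross--Schoen \cite{GS95} and Hartl \cite{Ha} to the curve-times-surface setting.
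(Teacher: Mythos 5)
Your toric framing is sound, and the one-step chart computation you do carry out agrees with the paper's: \'etale-locally the only problematic points are those lying over a node of $\CC_k$ and a singular point of $\CS_k$, with completed local ring $R[[x_1,x_2,y_1,\dots,y_b]]/(x_1x_2-\pi,\ y_1\cdots y_b-\pi)$, $b\in\{2,3\}$, and blowing up $(x_1,y_1)$ produces charts that are strictly semistable except, when $b=3$, for one chart carrying the conifold-type relation $x_1'x_2-y_2y_3$ together with $y_1y_2y_3=\pi$. The genuine gap is that your proposal stops exactly where the proof has to be finished: you defer as ``bookkeeping'' the verification that, in the prescribed ordering of components, each later center is again formally-locally cut out by two toric coordinates and that the iterated subdivision terminates in a regular (unimodular) configuration. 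That is the actual content of the proposition, not routine bookkeeping: the existence of \emph{some} unimodular triangulation of $\Delta^{a-1}\times\Delta^{b-1}$ says nothing about the particular model $\CX_t$ obtained from the Gross--Schoen order of blow-ups, and an arbitrary sequence of star-type subdivisions need not end unimodular unless one tracks which ideals are really being blown up at each closed point (including the points created on exceptional loci, where the center may have become Cartier and the blow-up does nothing locally).

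The paper closes this by exploiting the small range $a\le 2$, $b\le 3$ rather than a general triangulation argument: if either coordinate of $P_1$ is a smooth point, all components of the special fiber are Cartier near $P_1$, so the blow-ups do not change the formal local ring at all; otherwise a single blow-up either already yields strict semistability or leaves precisely the conifold chart above, whose singular locus is the intersection of the strict transforms of two of the \emph{remaining} components through the point, and since every component is blown up at some later stage, exactly one further blow-up (a small resolution of $x_1'x_2-y_2y_3$) puts the local ring of $\CX_t$ at $P$ into the form $R[[t_1,\dots,t_4]]/(t_1\cdots t_q-\pi)$. To complete your argument you must either carry out this residual case analysis, or, in your toric language, write down the specific triangulation of $\Delta^1\times\Delta^{b-1}$ produced by the ordered blow-ups at each such point and check its unimodularity; as written, the inductive claim that the center stays a toric regular sequence and that the final fan is regular is asserted, not proved.
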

\begin{proof} This is a very special case of a general result by Hartl  \cite[9, Proposition 2.1]{Ha}.
For our application, the proof of the next Proposition, we repeat the proof in our case. 
Let $P$ be a closed point of $\CX_t$. We need to show that the completion of $\CX_t$ at $P$ 
has the form
$$\Spf R[[t_1, \cdots, t_4]]/(\prod _{i=1}^q t_i-\pi)$$
for some integer $q$ between $1$ and $4$.
Let $P_i$ be the image of $P$ on $\CX_i$, and let $\wh \CX_i=\Spf \CO_i$ be the completion of $\CX_i$ at $P_i$. 

 Write $P_1=(p, q)$ with $p\in \CC_k$ and $q\in \CS_k$.
 If $p$ or $q$ is a smooth point, then $\CX_1$ is strictly semistable at $P_1$. Thus near $P_1$, all irreducible components of $\CX_{k}$ are Cartier.
 So, a blow-up along such a component does change the local completion at $P_1$. In this case, all $\CO_i$ are isomorphic to each other. 
 So, we may assume that both $p$ and $q$  are singular in the following discussion. In this case, $p$ is the intersection of two components $A_1\cdot A_2$,
and $q$ is the intersection of two or three components $B_j$. Then, the complete local ring of $\CO_1$ is given as one of the following two cases:
$$\CO_1=\begin{cases}R[[x_1, x_2, y_1, y_2, y_3]]/(y_1y_2-\pi, x_1x_2-y_1y_2),&\text{or}\\
R[[x_1, x_2, y_1, y_2, y_3]]/(y_1y_2y_3-\pi, x_1x_2-y_1y_2y_3)
\end{cases}
$$
Modulo the uniformizer $\pi$,  $\CX_{1 k}$ has  4 or 6 irreducible components $C_{i, j}$ defined by 
ideals $(x_i, y_j)$ for $i=1, 2$,  and $j=1,2$, or $3$. They are formal completions at $P_1$ of irreducible components $C_1^{i_k}$ for 
$1\le i_1<  \cdots < i_{s}\le t$, where $s=4$ or $6$.
It follows that $\CO_i=\CO_1$ for $i\le i_1$. Without loss of generality, we assume that $C_{1,1}=C_1^{i_1}$. 
Then $P_{i_1+1}$ is on the blowing up $\Spf \CO_{i_1}$ by  the ideal $(x_1, y_1)$. The formal completion of two affines covers this formal scheme with formal rings in two cases
$$\begin{cases} 
R[x_1', x_2, y_1,  y_3]/(x_1'x_2y_1-\pi),\quad R[x_1,  y_1', y_2, y_3]/(x_1y_1'y_2-\pi), \quad \text{or}\\
R[x_1', x_2, y_1, y_2, y_3]/(y_1y_2y_3-\pi, x_1'x_2-y_2y_3), \quad R[x_1,  y_1', y_2, y_3]/(x_1y_1'y_2y_3-\pi)
\end{cases}
$$
These are $\CO_1$-algebra generated by $x_1'=x_1/y_1, y_1'=y_1/x_1$. The point $P_{i_1+1}$ is defined by the ideal of $P_{i_1}$ and an element $z$  equal to 
$x_1'$ or $y_1'-a$ for some $a\in R$.  The above rings are strictly semitable at $P_{i_1}$  except  one case:
$$R[x_1', x_2, y_1, y_2, y_3]/(y_1y_2y_3-\pi, x_1'x_2-y_2y_3).$$
In this case, we have 
$$
\CO_{i_1+1} =R[[x_1', x_2, y_1, y_2, y_3]]/(y_1y_2y_3-\pi, x_1'x_2-y_2y_3), \qquad z=x_1'.$$
 One more blow-up will make $P_t$ a strictly semistable point for $\CX_t$.
\end{proof}

Next, we want to show that  $C_t^i$ can be obtained from $C_1^i$ by several blow-ups along smooth curves.
Let $Z^{i, j}=C_1^i\cap C_1^j$ for $i\ne j$. Then $Z^{i, j}$ has the form $(A_1\cap A_2)\times (B_1\cap B_2)$.
It is either  a smooth curve (when $A_1\ne A_2$, $B_1\ne B_2$), or a cartier divisor (when   $A_1=A_2$ or $B_1=B_2$). 
\begin{prop} \label{prop-cb}
For each $i$ between $1$ and $t$, the projection $C_t^i\lra C_1^i$ can be obtained by blowing up successively along  the strict transforms of the following smooth curves:
$$Z^{1, i}, \cdots, Z^{i-1, i}, Z^{i+1, i}, \cdots Z^{t, i}$$
\end{prop}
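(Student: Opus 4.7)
The plan is to proceed by induction on $j$, establishing that for every $i \in \{1, \ldots, t\}$ the natural morphism $C_j^i \to C_1^i$ is a composition of blow-ups of $C_1^i$ along the strict transforms of those $Z^{k,i}$ with $1 \le k < j$ and $k \neq i$. The base case $j = 1$ is vacuous, and taking $j = t$ recovers the proposition, with the convention that blowing up along a Cartier subscheme is the identity (so that Cartier-type $Z^{k,i}$ may be harmlessly included in the list).

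For the inductive step from $j$ to $j+1$, I would invoke the following general fact about blow-ups: if $\pi \colon \widetilde{Y} \to Y$ is the blow-up of $Y$ along a closed subscheme $W$, and $V \subset Y$ is a closed subscheme not contained in $W$, then the strict transform of $V$ in $\widetilde{Y}$ is canonically isomorphic to the blow-up of $V$ along the scheme-theoretic intersection $W \cap V$. Applied to $\pi \colon \CX_{j+1} \to \CX_j$ (blow-up along $W = C_j^{j+1}$) and $V = C_j^i$ for $i \neq j+1$, this identifies $C_{j+1}^i \to C_j^i$ with the blow-up of $C_j^i$ along $C_j^{j+1} \cap C_j^i$. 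The inductive hypothesis then handles $C_j^i \to C_1^i$, so only the identification of this new center remains.

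To match centers, I would verify that $C_j^{j+1} \cap C_j^i \subset C_j^i$ coincides with the strict transform of $Z^{j+1,i} \subset C_1^i$ under $C_j^i \to C_1^i$. This is a local question at each closed point $P$ of $\CX_j$. By Proposition \ref{prop-ss}, the formal completion of $\CX_j$ at $P$ is strictly semistable, and the components $C_j^{j+1}$ and $C_j^i$ are cut out locally by pairs of coordinate ideals of the form $(x,y)$ and $(x',y')$. Tracking the sequence $\CX_1 \leftarrow \cdots \leftarrow \CX_j$ through the explicit charts of Proposition \ref{prop-ss}, one checks that no intermediate blow-up center $C_{k-1}^k$ (with $k \neq j+1$ and $k \neq i$) contains $C_1^{j+1}$ or $C_1^i$ as a component. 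Hence taking strict transforms is compatible with intersection in this setting, and $C_j^{j+1} \cap C_j^i$ equals the strict transform of $Z^{j+1,i}$. When $Z^{j+1,i}$ is Cartier in $C_1^i$ (the case when $C_1^{j+1}$ and $C_1^i$ share an $A$-factor or a $B$-factor), the same local analysis shows its strict transform is still Cartier, and the induced blow-up is the identity, as needed.

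The main obstacle is the last local verification at the deepest singular points of $\CX_j$---namely the quadruple and sextuple intersection points of the special fiber where up to six components can meet, as in the proof of Proposition \ref{prop-ss}. Here one must check case by case, using the explicit normal forms there, that each blow-up in the sequence transforms the ideal of $Z^{j+1,i}$ in $C_1^i$ precisely into the ideal of $C_j^{j+1} \cap C_j^i$ in $C_j^i$, with no spurious exceptional components introduced into the intersection. Once this bookkeeping is carried out, the inductive step closes and the proposition follows.
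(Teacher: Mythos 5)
There is a genuine gap: your induction never treats the one step that makes this proposition delicate, namely the stage at which the blow-up center is (the strict transform of) the component $C^i$ itself. In the construction, $\CX_{j+1}$ is the blow-up of $\CX_j$ along $C_j^j$, so at the stage $j=i$ the subvariety you are tracking \emph{is} the center. Your general fact (strict transform of $V$ equals the blow-up of $V$ along $W\cap V$) requires $V\not\subset W$ and is explicitly applied only ``for $i\neq$ the center index,'' so it says nothing here; what you must understand is the map from the preimage of the center, $C_{i+1}^i\to C_i^i$. This map is \emph{not} an isomorphism in general: at the points where $C_i^i$ fails to be Cartier in $\CX_i$, the local computation in the proof of Proposition \ref{prop-ss} (with $C_i^i=(x_1,y_1)$, so $C_i^i$ has local ring $k[[x_2,y_2,y_3]]$) shows that $C_{i+1}^i\to C_i^i$ is the blow-up along $(x_2,y_2)$ when $x_1/y_1=y_2/x_2$, and along the non-reduced ideal $(x_2,y_2y_3)$ when $x_1/y_1=y_2y_3/x_2$. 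In particular your inductive statement --- that $C_j^i\to C_1^i$ is a composite of blow-ups along strict transforms of the $Z^{k,i}$ with $k<j$, $k\neq i$ --- is already false at $j=i+1$: the extra modification coming from the self-blow-up is a center that belongs to the \emph{later} part of the list (or, in the second case, is not any single $Z^{k,i}$ at all). So the induction cannot close step by step.

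This is exactly where the paper's proof does its work: it introduces the auxiliary tower $D_j^i$ (blow-ups of $C_1^i$ along the listed strict transforms only), compares it with the tower $C_j^i$ via maps $f_j$, and shows that the discrepancy created at stage $j=i$ is absorbed later --- either because the extra center $(x_2,y_2)$ coincides with a subsequent $Z^{k,i}$, or, in the $(x_2,y_2y_3)$ case, because after blowing up $(x_2,y_2)$ and $(x_2,y_3)$ that ideal becomes invertible, so the universal property of blow-ups yields a map $D_t^i\to C_{i+1}^i$ and hence that $f_t$ (but not the intermediate $f_j$) is an isomorphism. Your proposal contains neither the self-center analysis nor the universal-property argument that reconciles the two towers at the end, and the remaining ``bookkeeping'' you defer (matching $C_j^{c}\cap C_j^i$ with strict transforms of $Z^{c,i}$) is the easier part. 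To repair the argument you would have to weaken your inductive claim to a comparison of the two towers and supply precisely these missing steps.
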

\begin{proof}
For each $j$ between  $1$ and $t$ and different than $i$,
let $D_j^i$ denote the variety by successively blowing-up from $C_1^i$ along the strict transforms of $Z^{i, j-1}$'s if $j\ne i+1$.
We let $D_{i+1}^i=D_i^i$. 
The construction of $C_j^i$ is the same except for one extra blowing up on $\CX_i$ along $C_i^i$. Thus we have natural morphisms
$f_j:  C_j^i\lra D_j^i$. The $f_j$ is an isomorphism when $j\le i$. From the proof of Proposition \ref{prop-ss}, the $C_i^i$ is not Cartier only when locally $\CX_i$ looks one of the following three situations
$$\begin{cases}R[[x_1, x_2, y_1, y_2, y_3]]/(y_1y_2-\pi, x_1x_2-y_1y_2),\\
R[[x_1, x_2, y_1, y_2, y_3]]/(y_1y_2y_3-\pi, x_1x_2-y_1y_2y_3),\\
R[[x_1, x_2, y_1, y_2, y_3]]/(y_1y_2y_3-\pi, x_1x_2-y_1y_2).
\end{cases}$$
Assume that $C_i^i$ is defined by $(x_1, y_1)$. Then $C_i^i$ has complete local  ring $k[[x_2, y_2, y_3]]$.
Blowing up $(x_1, y_1)$ means gluing two affines by adding rational functions  $x_1/y_1$ and $y_1/x_1$, respectively.
 In the first and third cases, $x_1/y_1=y_2/x_2$. Thus $f_{i+1}: C_{i+1}^i\lra D_{i+1}^i$ is a  blow-up  along the ideal $(x_2, y_2)$.
 In other words, $C_{i+1}^i=D_k^i$ for some $k>i+1$. This implies that $f_t$ is an isomorphism.

In the second case, $x_1/y_1=y_2y_3/x_2$. Thus $f_{i+1}: C_{i+1}^i\lra D_{i+1}^i$ is a blow-up  along the ideal $(x_2, y_2y_3)$. 
For  constructing  $D_t^i$ for some $k>i+1$, we need to   blow-up  ideals $(x_2, y_2)$ and  $(x_2, y_3)$ in some order. 
After these blow-ups, the ideal sheaf  $(x_2, y_2y_3)$ will be invertible.  By universal property of blowing-up, there is a map $g_{i+1}: D^i_t\lra C_{i+1}^i$
such that $f_{i+1}\cdot g_{i+1}$ is the projection $D_t^i\lra D_{i+1}^i$. This implies that $f$ is an isomorphism. 
\end{proof}

 \section{Height pairings}\label{hp}
 
 In this section, we let $K$ be a number field and $X$ be a smooth, geometrically connected,  and projective variety over $K$ of dimension $n$.
 Let $\CX$ be a regular, flat, and projective model over $\CO_K$.
 But note that all results work for the function field $K$ of a smooth and projective curve $B$ over a  field with the following minor modification: 
 \begin{enumerate}
 \item the integral model $\CX$ over $\CO_K$ is replaced by the integral model $\CX$ over $B$, and 
 \item the Chow group $\wh \Ch^*(\CX)$ is replaced by  $\Ch^*(\CX)$.
  \end{enumerate}
 
 Recall that for each integer $i$, we have a cycle class map 
 $$\cl^i: \Ch^i(X)\lra H^{2i}(X)(i).$$ 
Let $A^*(X)$ and $\Ch^i(X)^0$ denote the image and kernel, respectively.
Then Beilinson \cite{Be} and Bloch \cite{Bl}  constructed  a height pairing 
$$\pair{\cdot, \cdot}_\BB: \Ch^i(X)^0\times \Ch^{n+1-i}(X)^0\lra \BR$$
under the conjecture that  {\em Beilinson--Bloch's  condition} always holds: 
{\em $X$ has a regular integral model $\CX$ over $\CO_K$
such for each integer $i$ and each cycle $\alpha \in\Ch^i(X)$ there is an integal cycle $\bar \alpha\in\Ch^i(\CX)$ with the following properties:
\begin{enumerate}
\item  the restriction of $\bar \alpha$ on the generic fiber $X$ is $\alpha$: $ \bar \alpha _K=\alpha$;
\item  for any vertical cycle $\beta\in \Ker ( \Ch^{n+1-i}(\CX)\lra \Ch^{n+1-i}(X))$,
$$\bar \alpha\cdot \beta=0.$$
\end{enumerate}}
Such an integral cycle can be extended into an arithmetic cycle  $\alpha^\flat=(\bar\alpha,  g_\alpha)$ by adding a green current $g_\alpha$ by the equation  $\pp g_\alpha =\delta _{\alpha}$.
The Beilinson--Bloch height pairing is then defined as 
the intersection number 
$$\pair{z, w}_\BB=z^\flat\cdot w^\flat.$$
This pairing does not depend on the choice of integral models and base change and has the following functorial properties:
\begin{enumerate}
\item 
Let $K'$ be a finite extension of $K$, and $X'$ and $X$ are smooth and projective varieties of dimension $n$ over $K'$ and $K$, respectively, such that both $X'$ and $X$ satisfy Beilinson--Bloch condition. Let  $f: X'\lra X$ be a morphism over $K$.
Then for any $\alpha\in\Ch^i(X')^0$ and $\beta\in \Ch^{n+1-i}(X)^0$, we have the following projection formula:
$$\pair {\alpha,  f^*\beta}_\BB=\pair{f_*\alpha, \beta}_\BB.$$
See for example \cite[Lemma 4.1 and Formula (18) on page 787]{Ku96} for more general statements.
\item The Beilinson--Bloch pairing is an extension of the N\'eron--Tate height pairing $\pair{\cdot, \cdot}_\NT$ on $\Pic ^0(X)\times \Alb (X)$ in the following sense:
for any $\alpha \in \Ch^1(X)^0, \beta \in \Ch^n(X)^0$, we have 
$$\pair {\alpha, \beta}_\BB=-\pair{\alpha, \AJ (\beta)}_\NT .$$
\item For any $\alpha\in  \Ch^i(X)^0, \beta \in \Ch^j(X)^0, \gamma \in \Ch^k(X)$ such that $i+j+k=n+1$.
Then
$$\pair{\alpha\cdot \gamma,\beta}_\BB=\pair{\alpha, \gamma \cdot \beta}_\BB.$$
Indeed, for any lifting $\wh \gamma \in\wh\Ch^k(X)$, we have
$$(\alpha\cdot \gamma )^\flat=\alpha ^\flat\cdot\wh \gamma, \qquad (\gamma\cdot \beta)^\flat=\wh\gamma \cdot \beta^\flat.$$
It follows that 
$$\pair{\alpha\cdot \gamma,\beta}_\BB=\wh \gamma \cdot \alpha ^\flat\cdot \beta ^\flat=\pair{\alpha, \gamma \cdot \beta}_\BB.$$
\end{enumerate}

In the rest of this section, we assume that $X=C\times S$ is a product of a curve and surface, both smooth and projective. 
The  key  result of this paper is as follows:
\begin{thm} Assume that $C$ and $S$ have strict semistable model on $\Spec \CO_K $.
Then $X$ satisfies Beilinson- Bloch's condition. In particular, the  Beilinson--Bloch height pairing is well-defined on $X$.
\end{thm}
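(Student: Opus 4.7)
The plan is to check the Beilinson--Bloch condition place by place at the closed points $s$ of $\Spec\CO_K$. At places of good reduction the product of the smooth models is itself a smooth model, and the condition is immediate. At a place of bad reduction, after replacing $\CO_{K_v}$ by its completion and then (harmlessly) by a finite unramified extension with algebraically closed residue field, I land in the local setup of \S\ref{sec-it}: both $C$ and $S$ admit strictly semistable models $\CC$ and $\CS$ over a complete discrete valuation ring $R$.

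I then apply the construction of \S\ref{sec-it}: successively blowing up $\CC\times_R\CS$ along the strict transforms of the irreducible components of its special fiber produces a strictly semistable $R$-model $\CX$ of $X$ (Proposition \ref{prop-ss}). By Proposition \ref{prop-cb}, each irreducible component of the special fiber $\CX_s$ is obtained from a product $A\times B$ of a smooth curve and a smooth surface by a sequence of blowups along smooth curves, so Corollary \ref{cor-sc} yields Grothendieck's standard conjectures on every such component. The higher-codimension strata of the normal-crossing divisor $\CX_s$ are smooth varieties of dimension at most two, where the standard conjectures are classical.

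With these geometric inputs in hand, the Beilinson--Bloch condition follows from the criterion of \cite{Zh21}. The mechanism is that, for a strictly semistable $\CX$, the Rapoport--Zink weight spectral sequence expresses $gr_{2i}^W H^{2i}(\CX_s)(i)$ as a subquotient of $\bigoplus_j H^{2i-2j}(Y^{[j]})(i-j)$, where $Y^{[j]}$ denotes the disjoint union of $(j+1)$-fold stratum intersections. The standard conjectures on each $Y^{[j]}$ then yield surjectivity of the cycle class map $\Ch^i(\CX_s)\to gr_{2i}^W H^{2i}(\CX_s)(i)$, so that for any $\alpha\in\Ch^i(X)$ and any initial extension $\bar\alpha_0\in\Ch^i(\CX)$ (say, the Zariski closure of the support) one may add a vertical cycle on $\CX_s$ to produce the required $\bar\alpha$. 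The hard part will be this surjectivity onto $gr_{2i}^W$; the geometric component-level input is already secured by Corollary \ref{cor-sc} and the classical low-dimensional cases, so all that remains is the spectral-sequence bookkeeping connecting cycles on strata to $gr_{2i}^W H^{2i}(\CX_s)(i)$, which is precisely what is furnished by \cite{Zh21}.
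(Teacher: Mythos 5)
Your proposal is correct and follows essentially the same route as the paper: construct the strictly semistable model of $C\times S$ by the successive blow-ups of \S\ref{sec-it}, use Proposition \ref{prop-cb} together with Corollary \ref{cor-sc} to get the standard conjectures on the components of the special fiber, and then invoke the criterion of \cite{Zh21} (Theorem 2.5.1 and Proposition 2.6.3 there) to conclude the Beilinson--Bloch condition. The extra material you include (reduction to the completed local rings and the weight spectral sequence mechanism) is just an unwinding of what the cited results of \cite{Zh21} provide, not a different argument.
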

\begin{proof} By assumption,  $C$ and $S$ have strictly semistable models $\CC$ and $\CS$ over $\CO_K$.  By Proposition \ref{prop-ss}, the blow-up $\CC\times_{\CO_K} \CS$ along its irreducible components in the special fiber
gives a strictly semistable model $\CX$ for $X$. By 
 Corollary \ref{cor-sc},    and Proposition
 \ref{prop-cb}, we know that each irreducible component of the special fiber of $\CX$ satisfies the standard conjecture. 
 Then, we apply Theorem 1.5.1 and Proposition 1.6.3 in \cite{Zh21}  to get that $\CX$ satisfies the Beilinson--Bloch condition.
 \end{proof}
 
 \begin{remark} The idea of using standard conjecture to define height pairing was first used by K\"unnemann \cite{Ku98a} based on 
 early work of Bloch, Gillet, and Soul\'e \cite{BGS}. 
 K\"unnemann's work \cite{Ku98a} covers the case of abelian varieties over local fields with total degeneration and the case of varieties uniformized by the Drinfeld upper-half spaces. Furthermore, in \cite{Ku98b}, K\"unnemann extended his work to 
 general abelian varieties by constructing appropriate regular integral models.
 \end{remark}
 
 Now, we want to extend the definition of the Beilinson--Bloch height pairing to the case where $C$ or $S$ does not have strictly semistable 
 models over $\CO_K$. 
 Using de Jong's alteration \cite{dJ}, there is a finite extension $K'$ of $K$ and generically finite morphisms $f_C: C'\lra C_{K'}$ 
 and $f_{S}: S'\lra S_{K'}$ from smooth and projective curve and surface respectively such that $C'$ and $S'$ 
 both have strictly semistable reduction over $\CO_{K'}$. Let $X'=C'\times S'$ and $f: X'\lra X$ the induced morphism. 
 Then we define a Beilinson--Bloch height pairing on $\Ch^i(X)^0$ by the formula:
 $$\pair {\alpha, \beta}_\BB:=\frac 1{\deg f} \pair {f^*\alpha, f^*\beta}_\BB, \qquad \alpha \in \Ch^i(X)^0, \beta \in \Ch^{4-i}(X)^0.$$
This definition does not depend on the choice of covering $X'\lra X$ because of the following two facts:
\begin{enumerate}
\item A third one will dominate any two such coverings. 
\item If $g:X''\lra X'$ is a generically finite morphism from another variety $C''\times S''$ over a finite extension $K''$ of $K$ with strictly semistable models  over $\CO_{K''}$,
then for any $\alpha \in \Ch^i(X')^0, \beta \in \Ch^{4-i}(X')^0$, 
 $$ \pair {g^*\alpha, g^*\beta}_\BB=\pair {g_*g^*\alpha, \beta}_\BB=\deg g \pair {\alpha, \beta}_\BB.$$
 In the last step, we used the projection formula from intersection theory \cite[Example 8.1.7]{Fu} to conclude that for a generically finite proper map between regular varieties $f_*f^*\alpha =(\deg f) \alpha $.
\end{enumerate}

We want to translate the height pairing to the intermediate Jacobians $J^i(X)$ defined in \S\ref{sec-htc}.
\begin{prop} The  Beilinson--Bloch pairing is induced by a pairing  on 
$$J^i(X)\times J^{4-i}(X)\lra \BR.$$
\end{prop}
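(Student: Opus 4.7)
The plan is to show that the Beilinson--Bloch pairing annihilates the kernel of the surjection $\Ch^i(X)^0 \twoheadrightarrow J^i(X)$ in each slot; by the symmetry of arithmetic intersection numbers, the induced pairing on $J^i(X) \times J^{4-i}(X)$ is then well--defined. The cases $i = 1, 3$ are immediate from the compatibility with N\'eron--Tate heights, while $i = 2$ is the substantive case and combines the projection formula, the associativity property, and Lemma \ref{lem-JCh}.

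For $i = 1$ (and dually $i = 3$), the left--hand factor requires nothing since $\Ch^1(X)^0 = \Pic^0(X)_\BQ = J^1(X)$ by definition. On the right, property (1) of the Beilinson--Bloch pairing asserts $\pair{\alpha, \beta}_\BB = -\pair{\alpha, \AJ(\beta)}_\NT$ for $\alpha \in \Ch^1(X)^0$ and $\beta \in \Ch^3(X)^0$, which visibly factors through the surjection $\AJ: \Ch^3(X)^0 \twoheadrightarrow \Alb(X)_\BQ = J^3(X)$.

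For $i = 2$, by the very definition of $J^2(X)$ it suffices to show that for every finite morphism $f: X' = C' \times S' \to X$, every pair $\beta_1, \beta_2 \in \Ch^1(X')^0$, and every $\delta \in \Ch^2(X)^0$, the quantity $\pair{f_*(\beta_1 \cdot \beta_2), \delta}_\BB$ vanishes. Applying in turn the projection formula and the associativity property (2) yields
\begin{equation*}
\pair{f_*(\beta_1 \cdot \beta_2),\, \delta}_\BB \;=\; \pair{\beta_1 \cdot \beta_2,\, f^*\delta}_\BB \;=\; \pair{\beta_1,\, \beta_2 \cdot f^*\delta}_\BB.
\end{equation*}
Since $\beta_2$ is homologically trivial and $f^*\delta \in \Ch^2(X')^0$, the product $\beta_2 \cdot f^*\delta$ lies in $\Ch^3(X')^0$; then property (1) reduces the right side to $-\pair{\beta_1,\, \AJ(\beta_2 \cdot f^*\delta)}_\NT$, and Lemma \ref{lem-JCh} applied on $X'$ gives $\AJ(\beta_2 \cdot f^*\delta) = 0$.

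The main obstacle is ensuring that the three formal properties (projection formula, associativity, N\'eron--Tate compatibility) hold for the pairing on $X$ itself, which in general was defined only through de Jong's alteration. To handle this, one fixes a common smooth projective refinement $\wt X = \wt C \times \wt S$ dominating, via finite morphisms, both $X'$ and the strictly semistable cover $X'' = C'' \times S''$ used to define $\pair{\cdot,\cdot}_\BB$ on $X$. All three properties are known directly on $\wt X$, and each is multiplicative in degree under the relevant finite pushforwards and pullbacks; the identities then descend to $X$ after dividing by $\deg(\wt X / X)$, so the computation above goes through verbatim.
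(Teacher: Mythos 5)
Your proof is correct and follows essentially the same route as the paper: the case $i=2$ is reduced, via the adjunction property and the N\'eron--Tate compatibility, to the vanishing of $\AJ$ on products of homologically trivial divisors, which is exactly Lemma \ref{lem-JCh}. The only difference is that you also spell out the reduction for classes pushed forward from $X'=C'\times S'$ using the projection formula (and worry about the alteration-defined pairing), a step the paper leaves implicit.
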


\begin{proof}
We need only check the case $i=2$: for all  $\alpha, \beta\in \Ch^1(X)^0,\gamma \in \Ch^2(X)^0$, we have identity
$\pair{\alpha\cdot\beta, \gamma}_\BB=0$. 
This identity follows from the  adjoint formula and relation  with the N\'eron--Tate formula and Lemma \ref{lem-JCh}:
$$\pair{\alpha\cdot\beta, \gamma}_\BB=\pair{\alpha, \beta\gamma}_\BB
=-\pair{\alpha,  \AJ (\beta\gamma)}_\NT=0.$$
\end{proof}

\begin{remark}Let $f: X'\lra X$ and  $\xi', \eta'$ be as in Remark \ref{rem-fc}. Then for any $\alpha\in J^i(X), \beta\in J^{4-i}(X')$,
 we have a projection formula:
 $$\pair{f^*\alpha, \beta}_\BB=\pair{\alpha, f_*\beta}_\BB.$$
 More generally for any $\gamma \in \Ch^3(X\times X)$, we have
  \begin{equation}
  \label{eq-proj}\pair{\gamma ^*\alpha, \beta}_\BB=\pair{\alpha, \gamma_*\beta}_\BB.\end{equation}
  This can be proved using operational arithmetic Chow groups of Gillet-Soule. See the proof of \cite[Formula (18),
  page 787]{Ku96}. More precisely, let $\wh\gamma  \in \wh \Ch_3(\CX\times \CX)$ be a lifting of $\gamma$ as an element in the homological  Chow group. Then, in terms of 
  $$(\gamma ^*\alpha)^\flat=p_{1*} (\alpha ^\flat \cdot _{p_2}\wh \gamma),
  \qquad (\gamma _*\beta )^\flat=p_{2*}(\beta^\flat\cdot _{p_1}\wh \gamma ).$$
  Thus, the equation \ref{eq-proj} becomes 
  $$\wh\deg (\beta^\flat \cdot_{p_1} (\alpha ^\flat \cdot _{p_2}\wh\gamma))
 = \wh\deg (\alpha ^\flat  \cdot_{p_1}(\beta^\flat\cdot _{p_2}\wh\gamma)).$$
 \end{remark}

Let $\CL\in \Ch^1(X)$ be ample cycle.  The intersection with $c_1(\CL)$ then defines an operator 
$$\sL: J^i(X)\lra J^{i+1}(X).$$
Notice that $\sL^2: J^1(X)\lra J^3(X)$ is a polarization. Thus  $\sL^2$ is an  isomorphism, and 
for any $\alpha\in J^1(X)$, 
$$-\pair{\alpha, \sL^2 \alpha}_\BB>0.$$

For $J^2(X)$, let $J^2(X)_0$ denote the kernel of $\sL$ called the primitive part of $J^2(X)$. Then, we have a decomposition:
$$J^2(X)=\sL J^1(X)\oplus J^2(X)_0.$$
Here is an arithmetic Hodge index conjecture:
\begin{conj}[Beilinson]
The Beilinson--Bloch height pairing on $J^2(X)_0$ is positive definite. 
\end{conj}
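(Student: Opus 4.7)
The plan is to decompose $J^2(X)_0$ via Proposition~\ref{prop-J} into the bi-primitive subgroup $J^2(X)_{00}$ and a ``diagonal'' complement $\Delta \cong \Pic^0(S)$, check orthogonality under $\pair{\cdot,\cdot}_\BB$, reduce definiteness on $\Delta$ to classical N\'eron--Tate positivity, and isolate the remaining difficulty on $J^2(X)_{00}$.

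\textbf{Step 1 (structure and orthogonality).} With $\sL = \xi + \eta$, a direct computation on the summands of Proposition~\ref{prop-J}, using $\eta^2 = 0$ (since $\Ch^2(C) = 0$) and the identification $\pi_S^*(\xi\cdot\xi^\vee) \mapsto ([C],\,0)$ in $J^3(X)$, shows that $\sL : J^2(X) \to J^3(X) = \Alb(C) \oplus \Alb(S)$ acts by
\[
\alpha_{00} + C\odot a + \eta\odot b + c\odot \xi^\vee \;\longmapsto\; (c,\; a + \xi\cdot b).
\]
Consequently $J^2(X)_0 = J^2(X)_{00}\oplus\Delta$, where $\Delta := \{\sigma(b) := \eta\odot b - C\odot(\xi\cdot b) : b \in \Pic^0(S)\}$. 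For $\alpha \in J^2(X)_{00}$, bi-primitivity combined with hom-triviality gives $\pi_{C*}(\alpha\cdot\pi_C^*\eta) = \pi_{S*}(\alpha\cdot\pi_C^*\eta) = 0$ and similarly $\AJ(\alpha\cdot\pi_S^*\xi) = 0$. Then property~(2) followed by property~(1) forces $\pair{\alpha,\eta\odot b}_\BB = \pair{\alpha,C\odot(\xi b)}_\BB = 0$, so $\pair{\alpha,\sigma(b)}_\BB = 0$ and the decomposition is orthogonal.

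\textbf{Step 2 (diagonal part).} Applying property~(2) repeatedly, the two diagonal self-pairings vanish: $\pair{\eta\odot b,\eta\odot b}_\BB = 0$ because $(\pi_C^*\eta)^2 = 0$, and $\pair{C\odot(\xi b), C\odot(\xi b)}_\BB = 0$ because $\xi^2\cdot b \in \Ch^3(S) = 0$. The cross term reduces via property~(2) to $\pair{\pi_S^*b,\;\eta\odot(\xi b)}_\BB$, and since $\AJ(\eta\odot(\xi b)) = (0,\xi\cdot b) \in \Alb(C)\oplus\Alb(S)$, property~(1) evaluates this to $-\pair{b,\xi\cdot b}_\NT$. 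Therefore
\[
\pair{\sigma(b),\sigma(b)}_\BB \;=\; 2\,\pair{b,\xi\cdot b}_\NT,
\]
which is the classical polarization N\'eron--Tate pairing on $\Pic^0(S)$ attached to the ample class $\xi$; this is positive definite by the standard theory of heights on abelian varieties.

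\textbf{Step 3 (bi-primitive part: the main obstacle).} The entire content of the conjecture concentrates on $J^2(X)_{00}$, where no reduction to divisor-theoretic heights is available. My strategy is to pass to a strict semistable model $\CX/\CO_{K'}$ via de~Jong's alteration (Proposition~\ref{prop-ss}), lift $\alpha \in J^2(X)_{00}$ to an arithmetic cycle $(\bar\alpha, g_\alpha) \in \wh\Ch^2(\CX)$ using the construction of~\cite{Zh21}, and decompose $\pair{\alpha,\alpha}_\BB$ into an archimedean Green-current contribution and a vertical intersection number at each finite place of $K'$. The archimedean half is governed by the Hodge--Riemann bilinear relations for bi-primitive $(2,2)$-forms on $X(\BC)$, which give the correct sign for an appropriate choice of Green current. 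The main obstacle, and the reason Beilinson's conjecture remains open in this setting, is the non-archimedean contribution: controlling it requires a Hodge-index-type inequality for codimension-two cycles on the vertical components of $\CX$ that is strictly stronger than the Grothendieck standard conjecture established in Corollary~\ref{cor-sc}. Such an inequality cannot be extracted from the framework of the present paper alone; a genuinely new input — for instance, a $p$-adic Hodge-theoretic or motivic mechanism producing local positivity of vertical intersection numbers — would be needed to close the argument.
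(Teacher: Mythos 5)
This statement is labelled a conjecture (Beilinson's arithmetic Hodge index conjecture) and the paper offers no proof of it; there is therefore no ``paper proof'' for your argument to match, and any complete proof you produced would be a major new result. Your proposal, to its credit, does not actually claim one: Steps 1 and 2 are a correct and worthwhile reduction, while Step 3 is a strategy sketch that you yourself concede cannot be closed. Concretely, your computation of the action of $\sL=\xi+\eta$ on the four summands of Proposition \ref{prop-J} is right, so $J^2(X)_0=J^2(X)_{00}\oplus\Delta$ with $\Delta=\{\eta\odot b-C\odot(\xi\cdot b)\}\cong\Pic^0(S)$; the orthogonality of the two pieces and the evaluation $\pair{\sigma(b),\sigma(b)}_\BB=2\pair{b,\xi\cdot b}_\NT$ follow correctly from properties (1) and (2) of the pairing in \S\ref{hp}, provided you anchor the sign of the N\'eron--Tate pairing between $\Pic^0$ and $\Alb$ to the paper's normalization (the displayed inequality $-\pair{\alpha,\sL^2\alpha}_\BB>0$ for $\alpha\in J^1(X)$), since that sign is a convention rather than ``standard theory.''

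The genuine gap is exactly where you locate it: positive definiteness on the bi-primitive part $J^2(X)_{00}$, which is the entire content of the conjecture, is not proved, and nothing in this paper (nor in \cite{Zh21}) supplies it. Moreover, even the partial claims in your Step 3 are heuristic rather than argumentative: the splitting of $\pair{\alpha,\alpha}_\BB$ into an archimedean Green-current term and vertical local terms depends on the choice of the extension $\bar\alpha$ and of $g_\alpha$ --- only the total is well-defined, because the Beilinson--Bloch class is normalized to kill vertical cycles --- so one cannot argue positivity place by place, and the appeal to Hodge--Riemann relations ``for an appropriate choice of Green current'' is not a proof of positivity of the archimedean contribution either. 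So the correct assessment is: your Steps 1--2 give a valid reduction of the conjecture to its bi-primitive core (and in fact re-derive, in a slightly different packaging, the kind of splitting the paper uses in \S\ref{sec-htc} and Corollary \ref{cor-ChJ}), but the statement itself remains open, as the paper intends by calling it a conjecture.
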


 \section{Arithmetic  diagonals}\label{sec-ad}
 This section assumes that $K$ is a number field or a function field
 and that $f: C\lra S$ is a morphism from a smooth and projective curve to a smooth and projective surface.
 We assume that $[C: f(C)]=1$. Let $\Gamma$ be the graph of $f$ in $X:=C\times S$.
 Let $\xi$ be an ample class in $\Ch^1(S)$ such that $\eta:=f^*\xi$ has degree $1$.
 Then by Corollary \ref{cor-ChJ}, we obtain a modified class in $J^2(X)_{00}$ given by 
 $$\gamma:=\Gamma -[C]\boxtimes f_*\eta-\eta\boxtimes f_*[C]-\mu_f\cdot \xi$$
 where $\mu_f\in \Ch^1(X)_{00}$ is characterized by the following identities in $\Alb (C)$ and $\Alb (S)$: 
 $$\mu _f^*(\xi^2)=0, \quad f_*(p)-f_*\eta=\mu_{f*}(p)\cdot \xi, \quad\forall p\in C.$$
 We call $\gamma$ an {\em arithmetic diagonal}.  
 
 \begin{example}
Assume that $f: C\lra S=C\times C$ is the diagonal embedding to its self-product. Then $X=C\times C\times C$.
Let  $\xi=\frac 12e\times C+\frac 12 C\times e$  be an ample class with   $e\in \Ch^1(C)$ of degree $1$. Then $\eta=e$.
In this way, $\gamma$ is the modified diagonal defined by Gross and Schoen \cite{GS95}. They showed that $\gamma =0$ if 
 $C$ is rational, elliptic, or hyperelliptic. 
 When $g(C)\ge 2$, the height of this modified diagonal has been computed in our previous paper \cite{Zh10}, Theorem 1.3.1:
$$\pair{\gamma, \gamma}=\frac{2g+1}{2g-2}\omega ^2+\pair{x_e, x_e}_\NT+\sum _v \varphi (X_v)\log \RN (v).$$
where $\omega$ is the admissible relative dualizing sheaf of $C$, $x_e=e-K_c/(2g-2)\in \Ch^1(C)^0$, and 
$v$ runs through the set of places of $K$, and $\varphi(X_v)$ are local invariants of $v$-adic curve $X_v$.
The $\NS(S)_0$ is generated by $e\times C-C\times e$ and the subgroup $\NS(S)_{00}$ of classes of line bundles whose restriction on 
$e\times C$ and $C\times e$ are both trivial. For each $h\in \NS(X)_0$, $f^*h$ is the divisor of fixed points of $h$.
\end{example}

 By Proposition \ref{prop-000}, we can also compute the projection of $\gamma$ to the subspace $\Pic^0(C)\boxtimes \NS (S)_0$ with respect to an orthonormal base
 $h_i$ with coefficient in $\BR$ in the sense $\pair{h_i, h_j}_\NS=-\delta _{ij}$:
 $$\sum_i \gamma^*h_i\boxtimes h_i=\sum _if^*_\eta h_i\boxtimes h_i, \qquad f^*_\eta h_i:=(f^*h_i-(\deg f^*h_i)\eta)\boxtimes h_i.$$
 Thus, Beilinson's Hodge index conjecture implies the following inequality:
$$\pair{\gamma, \gamma}_\BB\ge \sum _i \pair{f_\eta^*h_i, f_\eta^*h_i}_\NT.$$

 \begin{remark}[Correspondences and arithmetic GGP]
 We can define a family of cycles $t_*\gamma$ indexed by elements by rigid  $t\in \Ch^3(X\times X)_\rig$ as defined in Remark \ref{rem-fc}. 
 When $f: C\lra S$ is an embedding  of a Shimura curve into a Shimura surface, and  $\xi$,  $\eta$ are  Hodge classes, our construction of height pairings $\pair{t_*\gamma, t_*\gamma }_\BB$ gives an unconditional formulation of  the arithmetic Gan--Gross--Prasad conjecture \cite{GGP, Zh18, Zh19}
which relates the heights of these cycles to the derivative of the $L$-series. In the triple product case,  $f$ is the diagonal embedding $\Delta: C\lra S=C\times C$, already studied in  Gross--Kudla \cite{GK}, and in our recent work \cite{YZZ}. 
 In the general case, we take Hecke eigenforms $\alpha\in \Gamma (C, \Omega _C^1)_\BC$ and $\beta\in \Gamma (S, \Omega _S^2)_\BC$ for some embedding $K\lra \BC$, 
 and Hecke operators 
 $t$ with complex  coefficients whose action on $\Gamma (X, \Omega _X^3)_\BC$ is an  idempotent with image $\BC \alpha\beta$. Then, by GGP conjecture, 
 $\pair{t_*\gamma, t_*\gamma}_\BB$ will be related to the derivative  of certain Rankin--Selberg $L$-series $L(\pi_\alpha\times \pi_\beta, s)$ at its center of symmetry.
 Here is a simple example: take $B$, an indefinite quaternion algebra over $\BQ$ (could be $M_2(\BQ)$), and $F$, a real quadratic field embedded in $B$. Let $\CO_B$ be a maximal order of $B$ containing $\CO_F$. Then we have the Hilbert moduli surface $S$ parameterizing polarized abelian surfaces $A$ with action by $\CO_F$ with some level structure, and the Shimura curve $C$ inside $S$ parameterizing polarized abelian surfaces 
 with action by $\CO_B$.
 \end{remark}

In the rest of this paper, we assume that $H^1(S)=0$. Then, we can write 
 $$\gamma =\Gamma -[C]\boxtimes f_*\eta-\eta\boxtimes f_*C.$$
 More generally, for any  cycle  $e\in \Ch^1(C)$ of degree $1$, and define
 $$\gamma _e= \Gamma -[C]\boxtimes f_*e-e\boxtimes f_*C\in J^2(X).$$
 This class is killed by any polarization $\eta\in \Ch^1(C)$ but not necessarily by polarization $\xi\in \Ch^1(S)$.
 It is killed by  $\xi$ if and only if $e=f^*\xi$. 
 For two different $e_1, e_2\in \Ch^1(C)$ of degree $1$,  the difference is given by 
 $$\gamma _{e_2}=\gamma_{e_1} +(e_1-e_2)\boxtimes f_*C.$$
 Let $\varphi=f^*f_*C\in \Ch^1(C)$ and $d=\deg \varphi$.
 Then, we can compute the difference between their  heights as  follows:
\begin{align*}
 \pair{\gamma _{e_2}, \gamma _{e_2}}_\BB&=\pair{\gamma_{e_1}, \gamma_{e_1} }_\BB-2\pair{\gamma_{e_1}^*f_*C,  e_1-e_2}_\NT
 -\pair {e_2-e_1, e_2- e_1}_\NT d\\
 &=\pair{\gamma_{e_1}, \gamma_{e_1} }_\BB-2\pair{\varphi-de_1,  e_1-e_2}_\NT-\pair {e_1-e_2, e_1-e_2}_\NT d\\
\end{align*}

If $d=0$, we have 
$$
 \pair{\gamma _{e_2}, \gamma _{e_2}}_\BB=\pair{\gamma_{e_1}, \gamma _{e_1}}_\BB-2\pair{\varphi,  e_1-e_2}_\NT
$$
If $\varphi=0$, then the height of $\gamma _e$ does not depend on the choice of $e$.

If $d\ne 0$, we take $e_0=\varphi/d$, to obtain for any $e$, 
$$
 \pair{\gamma _e, \gamma _e}_\BB=\pair{\gamma_{e_0}, \gamma_{e_0} }_\BB-d\pair{e-e_0,  e-e_0}_\NT
$$
Thus $\pair{\gamma _e, \gamma _e}_\BB$ has its maximal value at $e_0$ if $d>0$ and minimal value at $e_0$ if $d<0$.

\begin{example} Take $S=\BP^2_B$ with a hyperplane $H$, and  and  $e_S=H^2\in \Ch^2(S)$.
 Let $f: C\lra S$ be an embedding from a smooth curve. 
Then, as a Chow cycle
$$\Delta _S=e_S\boxtimes S+S\boxtimes e_S+H\boxtimes H, \qquad \Gamma=C\boxtimes e_S+f ^*H\boxtimes H$$
It follows that 
$ \gamma =0$ for any polarization $\xi$ on $S$.
\end{example}

\begin{example}
Take $S=\BP^1\times \BP^1$ with two hyperplanes $H_1=0\times \BP^1, H_2=\BP^1\times \infty$. Then $\NS (S)=\BQ H_1+\BQ H_2$.
Let $f: C\lra S$ be an embedding from a smooth curve linearly equivalent to   $aH_1+bH_2$ with both $a, b>0$.
Then we can define an orthogonal basis for $\NS(S)=\Ch^1(S)$:
$$h_1=[\iota C]=aH_1+bH_2, \qquad h_2=aH_1-bH_2.$$
Thus, as Chow cycles, we have the following identity:
$$\Delta _S=e_S\boxtimes S+S\boxtimes e_S+h_1\boxtimes h_1/(h_1, h_1)+h_2\boxtimes h_2/(h_2, h_2)$$
where $e_S=0\times \infty \in \Ch^2(S)$.
Take $\xi=[C ]/[C]^2$ and $\eta=f^*\xi$. Then 
 $$\Gamma =C\boxtimes e_S+\eta\boxtimes f (C)+\iota ^*h_2\boxtimes h_2/(h_2, h_2).$$
 It follows that 
 $$\gamma =f ^*h_2\boxtimes h_2/(h_2, h_2).$$
It is clear that $\alpha:=f ^*(h_2)$ has degree $0$. In fact, let $\pi_1, \pi_2: C\lra \BP^1$ be two projections induced by $f$,
then $\alpha=a\pi_1^*\infty-b\pi_2^*\infty$. Thus 
$$\pair{\gamma, \gamma }_\BB=-\pair {\alpha, \alpha}_\NT/(h_2, h_2)=\frac 1{2ab} \pair {\alpha, \alpha}_\NT.$$
Moreover, the projection of $\gamma$ on $J^2(X)_{00}=\Alb (C)\boxtimes \NS (S)_0$ is given by 
$$\alpha\boxtimes h_2/(h_2, h_2).$$
\end{example}

\section{Unramified calculation}\label{uc}
Now we assume that $K=k(B)$ is the function field of a smooth and projective  curve $B$ over an algebraically closed field $k$, 
and   $f: C\emb S$ an embedding from 
a smooth and proper curve to a smooth and proper surface over $K$. 
We assume that $d:=\deg f^*f_*C\ne 0$. As in \S\ref{sec-ad}, let 
 $e=f^*f_*C/d\in \Ch^1(C)$ of degree $1$, and define a homologically trivial cycle
 $$\gamma:=\Gamma (f)-C\boxtimes f_*(e)-e\boxtimes f_*C\in J^2(X).$$
We want to compute its heights when $f$ extends to a morphism
$\bar f: \CC\lra \CS$ of a smooth and proper curve over $B$ to a smooth and proper surface over $B$.
Denote 
$$\omega _\CC:=\Omega_{\CC/B}^1, \quad \omega _\CS=\wedge ^2\Omega _{\CS/B}^1.$$

\begin{thm}\label{thm-gamma-bb}
$$\pair{\gamma, \gamma}_\BB=
-\frac 1d ((d+1) \omega_{\CC}-\bar f^*\omega _S)\cdot (\omega_\CC-\bar f^*\omega _\CS).$$
\end{thm}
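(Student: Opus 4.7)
The plan is to use the unramified hypothesis to avoid the arithmetic machinery of Section~\ref{hp}. Since $\CC$ and $\CS$ are smooth and proper over $B$, the product $\CX:=\CC\times_B\CS$ (with projections $p_\CC,p_\CS$) is already a smooth proper $4$-fold model of $X=C\times S$, all of whose closed fibers are smooth. Consequently there is no vertical-cycle correction, and the Beilinson--Bloch pairing should reduce to the direct intersection number $\bar\gamma\cdot\bar\gamma$ on $\CX$ for any lift $\bar\gamma\in\Ch^2(\CX)$ that is fiberwise homologically trivial; this last condition is automatic in a smooth family by local constancy of cohomology classes. The natural lift is
\[\bar\gamma=\bar\Gamma-p_\CS^*\bar f_*\bar e-p_\CC^*\bar e\cdot p_\CS^*\bar f_*\CC,\]
where $\bar\Gamma$ is the graph of $\bar f$. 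By relative adjunction for the codimension-one closed immersion $\bar f:\CC\hookrightarrow\CS$, one has $\bar f^*\bar f_*\CC=c_1(N_{\CC/\CS})=\omega_\CC-\bar f^*\omega_\CS$; setting $\kappa=\omega_\CC$, $\phi=\bar f^*\omega_\CS$, $\lambda=\kappa-\phi$, the preferred extension of $e$ is thus $\bar e=\lambda/d$.

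The next step is to expand $\bar\gamma\cdot\bar\gamma$ into six intersection numbers on $\CX$, which are evaluated by three tools. First, from the normal-bundle sequence
\[0\to T_{\CC/B}\to\bar f^*T_{\CS/B}\to N_{\CC/\CS}\to 0\]
one computes $c(\bar f^*T_{\CS/B})=(1-\kappa)(1+\lambda)$, hence $c_2(N_{\bar\Gamma/\CX})=c_2(\bar f^*T_{\CS/B})=-\kappa\cdot\lambda$, so the graph self-intersection is $\bar\Gamma\cdot\bar\Gamma=-\kappa\cdot\lambda$. Second, the graph identity $\bar\Gamma\cdot p_\CC^*w\cdot p_\CS^*u=(\mathrm{id},\bar f)_*(w\cdot\bar f^*u)$ combined with the excess-intersection formula $\bar f^*\bar f_*\alpha=c_1(N_{\CC/\CS})\cdot\alpha$ gives $\bar\Gamma\cdot p_\CS^*\bar f_*\bar e=\bar\Gamma\cdot p_\CC^*\bar e\cdot p_\CS^*\bar f_*\CC=\lambda^2/d$. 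Third, the Fubini-type identity
\[\int_\CX p_\CC^*\alpha\cdot p_\CS^*\beta=\int_B q_{\CC*}\alpha\cdot q_{\CS*}\beta,\]
where $q_\CC:\CC\to B$ and $q_\CS:\CS\to B$ are the structure morphisms, follows from flat base change plus two projection formulas on the cartesian square; together with the dimensional vanishing $(p_\CS^*\bar f_*\bar e)^2=0$ on the $3$-fold $\CS$, it gives the remaining two terms, each equal to $\lambda^2/d$.

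Assembling the six contributions with the signs from $\bar\gamma=\bar\Gamma-p_\CS^*\bar f_*\bar e-p_\CC^*\bar e\cdot p_\CS^*\bar f_*\CC$ yields
\[\bar\gamma\cdot\bar\gamma=-\kappa\cdot\lambda-\lambda^2/d,\]
and substituting $\lambda=\kappa-\phi$ and regrouping produces $-d^{-1}\bigl((d+1)\kappa-\phi\bigr)\cdot(\kappa-\phi)$, which is precisely the claimed formula.

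The one step that is more than routine bookkeeping is the initial reduction: one must verify that in this smooth unramified situation the abstract Beilinson--Bloch pairing of Section~\ref{hp}, whose construction runs through de~Jong alterations and arithmetic intersection theory, actually collapses to the naive intersection on the smooth model $\CX$. This should follow formally from the smoothness of $\CX\to B$ (which eliminates the need for any alteration and ensures that no vertical components enter) together with the base-change invariance of the pairing under finite covers $X'\to X$; once this is granted, the remainder is Chern-class and intersection-theoretic bookkeeping of the kind described above.
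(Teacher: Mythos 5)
Your proposal is correct, and its overall strategy is the paper's: extend $\gamma$ over the smooth model $\CX=\CC\times_B\CS$, identify $\pair{\gamma,\gamma}_\BB$ with the self-intersection of the extension (legitimate here because the fibers are smooth, so any lift of the homologically trivial $\gamma$ restricts cohomologically trivially to every closed fiber by specialization/smooth proper base change, and the pairing is independent of the choice of model), and finish with the adjunction $\bar f^*\bar f_*\CC=\omega_\CC-\bar f^*\omega_\CS$. The execution, however, differs in two genuine ways. First, you take the unnormalized extension $\bar e=\bar f^*\bar f_*\CC/d$, whereas the paper adds the fiber-class correction $\kappa F$ so that $\bar e^2=0$; your choice is harmless because the two lifts differ by vertical classes whose fiber restrictions are cohomologically trivial, and indeed your six-term expansion correctly carries the extra $\bar e^2$ contribution ($C^2=\lambda^2/d$) that the paper's normalization kills, landing on the same total $-\omega_\CC\cdot\lambda-\lambda^2/d$. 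Second, you compute directly on the $4$-fold, using $N_{\bar\Gamma/\CX}\cong\bar f^*T_{\CS/B}$ and the Fubini identity for $p_\CC^*\alpha\cdot p_\CS^*\beta$, while the paper writes $\bar\gamma=i_*\delta$ for $i:\CC\times_B\CC\to\CX$, uses $i^*i_*$ to reduce to $p_{2*}(\delta^2)\cdot\bar f^*\bar f_*\CC$ on $\CC\times_B\CC$, and evaluates via $\Delta^*\Delta=-\omega_\CC$. Your route is slightly more elementary (no push-down to $\CC\times_B\CC$), the paper's normalization makes the bookkeeping shorter; both need $\bar f$ to extend $f$ as a closed immersion for the normal-bundle sequence and adjunction, an assumption the paper also makes implicitly. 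Two small remarks: the phrase ``local constancy of cohomology classes'' should be replaced by the specialization isomorphism of smooth proper base change applied to the restriction of the lift, and the appeal to base-change invariance under finite covers $X'\to X$ is unnecessary here, since the smooth model exists over $B$ itself and Section~\ref{hp} already gives independence of the pairing from the choice of model.
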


\begin{proof}
 First we want to extend $\gamma$ to a cycle on the  four-fold $\CX=\CC \times _B\CS$
 $$\bar \gamma =\Gamma(\bar f) -\bar e\boxtimes \bar f_*(\CC)-\CC\boxtimes \bar f_* {\bar e}\in \Ch^2(\CX).$$
 where 
$$\bar e:=\bar f^*\bar f_*\CC /d +\kappa F\in \Ch^1(\CC).$$
where $F$ is the fiber class of $\CC$ over $B$ and  $\kappa =-\frac 12 (f^*f_*\CC/d)^2$ so that $\bar e^2=0$ on $\CC$.

Let $i: \CY:=\CC\times_B \CC\lra \CX$ denote the embedding induced by $f$ and let $\delta$ denote the cycle $\Delta-e\times \CC-\CC\times e$.
Then we have $\bar \gamma =i_*\delta$. Thus 
\begin{align*}
&\pair{\gamma, \gamma}_\BB=i_*\delta \cdot i_*\delta =\delta\cdot   i^*i_*\delta
=\delta  \cdot \delta \cdot  i^*i_*\CY =\delta \cdot \delta \cdot i^*\pi_S^* \bar f_* \CC\\
=&\delta \cdot \delta \cdot p_2^*\bar f ^* \bar f_*\CC =p_{2*}(\delta\cdot \delta) \cdot \bar f^*\bar f_*\CC
\end{align*}

We need to compute $p_{2*}(\delta\cdot \delta)$:
$$
p_{2*}(\delta\cdot \delta)=p_{2*}(\Delta \cdot (\Delta-2\bar e\boxtimes \CC-2\CC\boxtimes \bar e))+p_{2*}((\bar e\boxtimes C+C\boxtimes \bar e)^2).
$$

For the first term, we use the projection formula for the diagonal embedding:
\begin{align*}
p_{2*}(\Delta \cdot (\Delta-2\bar e\boxtimes \CC-2\CC\boxtimes \bar e))&=p_{2*}\Delta _*\Delta ^*(\Delta-2e\boxtimes \CC-2\CC\boxtimes \bar e))\\
&=\Delta ^*(\Delta-2\bar e\boxtimes \CC-2\CC\boxtimes \bar e))\\
&=-\omega_\CC -4\bar e.
\end{align*}

For the second term, we compute it directly,
$$
p_{2*}((\bar e\boxtimes \CC+\CC\boxtimes \bar e)^2)=p_{2*}(\bar e^2\boxtimes  \CC+2\bar e\boxtimes  \bar e+\CC\boxtimes \bar e^2)=2\bar e.
$$
Thus we have
$$p_{2*}(\delta\cdot \delta)=-\omega_\CC -2\bar e.$$

Put these two terms together, we get 
$$\pair{\gamma, \gamma}_\BB=-(\omega_\CC+2\bar e)\cdot \bar f^*\bar f_*\CC=-\omega_{\CC}\cdot \bar f^*\bar f_*\CC+2d\kappa 
=-\omega_{\CC}\cdot \bar f^*\bar f_*\CC-\frac 1d (\bar f^*\bar f_*\CC)^2.$$

The final formula in Theorem \ref{thm-gamma-bb} follows from the adjunction formula:
$$f^*f_*\CC=\omega _\CC-f^*\omega_S.$$
\end{proof}

\begin{example}
We assume that $\CS/B$ is a smooth family of $K3$ surfaces. Then $\omega _\CS$ is a vertical class $h(S) F$, for 
the canonical height  
$h(S)\in \BR$ of $\CS$.
Then $f^*f_*^*\CC=\omega _\CC-h(S) F$ with degree $d=2g-2$.
It follows that 
$$\pair{\gamma, \gamma}_\BB=-\frac {2g-1}{2g-2}\omega_\CC^2+2g h(S) .$$
If $g\ge 2$, then by Beilinson's index conjecture,  $\pair{\gamma, \gamma}_\BB\ge 0$. Thus, we have the following conjectured inequality: 
$$\omega _\CC^2\le \frac {4g(g-1)}{2g-1}h(S).$$
\end{example}

\end{document}